\oddsidemargin=1cm

 \documentclass[12pt,reqno]{amsart}
\usepackage{amssymb , amsmath, amsthm}
\usepackage{amsmath}
\usepackage{amsbsy}
\usepackage{amssymb}
\usepackage{color}
\usepackage[dvips]{graphicx}
\usepackage{subfigure}
\oddsidemargin=1cm 
\evensidemargin=1cm
\textwidth=15.5cm 
\textheight=8.2in
\topmargin=2cm 

\parindent=0pt

\usepackage[active]{srcltx}

\usepackage{amsmath}
\usepackage{amsfonts}
\usepackage{amssymb}
\usepackage{amstext}
\usepackage{amsbsy}
\usepackage{epsf}

\newtheorem{theo}{Theorem}[section]
\newtheorem{prop}{Proposition}[section]
\newtheorem{coro}{Corollary}[section]
\newtheorem{lemma}{Lemma}[section]
\newtheorem{definition}{Definition}[section]

\newtheorem{remark}{Remark}[section]


\newcommand{\thmref}[1]{Theorem~\ref{#1}}




\def\r{\mathbb R}

\def\h{\mathbb H}

\def\n{\mathbb N}


\def\Hip{\mathbb H}

\let\hip=\Hip

\newcommand{\cal}{\mathcal}

\newcommand{\R}{\mathbb R}

\newcommand{\wt}{\widetilde}

\let\h=\hip

\let\HH=\hip

\def\Mc{\mathcal M}

\def\rmd{\mathop{\rm d\kern -1pt}\nolimits}
\def\rme{\mathop{\rm e\kern -1pt}\nolimits}



\def \Cc{{\mathcal C}}

\def \Mc{{\mathcal M}}

\def\bel{ \medskip
 \centerline{$ \ast \hbox to 1.0cm{}\ast \hbox to 1.0cm{}\ast $}
}

\def\longerrightarrow{-\kern-5pt\longrightarrow}

\def\star{\lower 1pt\hbox{*}}
\def \nulset {
\raise 1pt\hbox{ \hskip -3pt$\not$\kern -0.2pt \raise
.7pt\hbox{${\scriptstyle\bigcirc}$}}}

\newcommand{\hi}[1]{\mathbb{H}^#1}

\newcommand{\ch}{\cosh}

\newcommand{\pain}{\partial_{\infty}}

\newcommand{\ov}[1]{\overline{#1}}

\let\leq=\leqslant
\let\geq=\geqslant





\begin{document}

\title[Maximum Principle and symmetry]{Maximum
Principle and Symmetry for\\ Minimal Hypersurfaces
in ${\mathbb H}^n\times {\mathbb R}$ }

\author{Barbara Nelli, Ricardo Sa Earp, Eric Toubiana}

\address{Barbara Nelli {\em (corresponding author)}\newline
Dipartimento Ingegneria e Scienze dell'Informazione e Matematica \newline
Universit\'a di L'Aquila \newline 
via Vetoio - Loc. Coppito \newline
67010 (L'Aquila) \newline
 Italy}
\email{nelli@univaq.it}

 \address{Ricardo Sa Earp\newline
 Departamento de Matem\'atica \newline
  Pontif\'\i cia Universidade Cat\'olica do Rio de Janeiro\newline
Rio de Janeiro \newline
22453-900 RJ \newline
 Brazil }
\email{earp@mat.puc-rio.br}

\address{Eric Toubiana\newline
Universit\'e Paris Diderot - Paris 7 \newline
Institut de Math\'ematiques de Jussieu, UMR CNRS 7586 \newline
UFR de Math\'ematiques, Case 7012 \newline
B\^atiment Chevaleret \newline
75205 Paris Cedex 13 \newline
France}
\email{toubiana@math.jussieu.fr}

\date{}



 \thanks{ Mathematics Subject Classification: 53A10 (primary), 53C42 (secondary).\\
  Keywords: minimal surface, vertical graph,  minimal end, maximum principle,
  asymptotic boundary.\\
 The authors were partially supported by CNPq
 and FAPERJ of Brasil.}

\begin{abstract}
The aim of this work is to study how the asymptotic boundary   of a  
minimal hypersurface in $\h^n\times\r$
determines the behavior of the hypersurface at finite points, in several geometric situations.
\end{abstract}

\maketitle


\section{Introduction}
\label{introduction}

In this  article we  discuss how, in several geometric situations, the shape at infinity of a minimal surface in
$\h^2\times\r$ determines the shape of the surface itself. 

A beautiful theorem in minimal  surfaces theory is the Schoen's
characterization
of the catenoid \cite{Schoen}. It can be stated as follows. 
{\it  Let $M\subset \R^3$ be a complete immersed minimal surface with two annular ends.
Assume that each end is a graph, then $M$ is a catenoid.} 

On  the other hand, there exists  a complete  minimal
annulus immersed in a
slab of $\R^3$ \cite{R-T}.

A characterization of the catenoid in the hyperbolic space,
assuming 
regularity at infinity, was established by   G. Levitt and H. Rosenberg in 
\cite{Lev-Ros}. In a joint work with L. Hauswirth \cite{HNST},  the authors  of
the present article proved  a 
Schoen type theorem in $\hi2\times \r$, \ in the class of finite total
curvature surfaces.

\smallskip

Our first result is  a  new    Schoen type theorem in
$\hi2 \times \R$.   Namely,  we replace Schoen's  assumption {\em each end is a graph}
with 
the assumption {\em each end is a vertical graph whose  asymptotic boundary is a copy
of
the  asymptotic boundary of  $\hi2$} (Theorem \ref{T.Catenoid}).

\smallskip
Our second result is  a  {\em maximum principle}  in a vertical (closed) halfspace.  Assume
that $M$ is a complete
minimal
surface, possibly with finite boundary, properly immersed
in $\h^2\times\r$ and 
that  the boundary of $M$, if any, is contained in  the closure of a  
 vertical halfspace
$P_+.$ Assume further that the points at finite height of the asymptotic 
boundary of $M$ are contained in the asymptotic boundary  of the halfspace
$P_+.$ Then $M$ is entirely contained in the halfspace $P_+$, unless $M$ is equal to the
vertical halfplane $\partial P_+$ 
 (Theorem \ref{plano}).


\smallskip
 Then we generalize our results to higher dimensions. 

Theorem  \ref{T.Catenoid} and Theorem \ref{plano}  in higher dimension are analogous
to the 2-dimensional case. 
In order to generalize Theorem  \ref{T.Catenoid}, we first need  to  give a
characterization of the 
$n$-catenoid  analogous to that of  the 2-dimensional case 
(Theorem \ref{T.n-Catenoid},  see also \cite{B-SE}).

 Moreover in the higher dimensional case, it is worthwhile to state some interesting
consequences of our results.

Let $S_\infty$ be a closed set 
contained in an open slab of $\pain \h^n \times \r$ with height equal to $\pi/(n-1)$ 
such that the projection of $S_\infty$ on 
$\pain \h^n \times\{0\}$ omits an open subset.

We prove that there is no complete properly immersed 
minimal hypersurface $M$ whose  asymptotic boundary is  $S_\infty$
(Theorem \ref{T.slab.catenoid}-(\ref{item.non existence})).

Finally we prove an Asymptotic Theorem (Theorem \ref{T.Asymptotic Theorem}),
 that  implies the following non-existence
result.
There is no horizontal minimal
graph over a  bounded strictly convex domain, see
\cite[Equation (3)]{Sa2}, given by a positive function $g$ continuous 
up to the boundary,
taking zero boundary value data (Remark \ref{R.graph}).

\bigskip

\subsection*
{Acknowledgements} The first and the third authors wish to thank {\em
Departamento de
Matem\'atica da
PUC-Rio} for
the kind hospitality. 
The first and second authors
wish to thank {\em Laboratoire} {\em G\'eom\'etrie et Dynamique de l'Institut de
Math\'ematiques de Jussieu} for the kind hospitality.

\section{A characterization of the catenoid in $\hi2\times \R$}

We are going to prove  the  characterization of the catenoid presented in
the
Introduction.

Any  surface in $\h^2\times \r$ with constant third coordinate is a complete
totally  geodesic surface called a {\em slice}.
 For any  $s\in \R$, we
denote 
by $\Pi_s$ the slice $\hi2 \times \{s\}$ and we set 
$\Pi_s^+=\{(p,t) \mid p\in \hi2,\  t>s\}$ and 
$\Pi_s^-=\{(p,t) \mid p\in \hi2,\  t<s\}$.
 For simplicity $\Pi$ stands for 
$\Pi_0$.

\begin{lemma}\label{L.vertical}
 Let $\Gamma^+$ and $\Gamma^-$ be two Jordan curves in $\pain\hi2 \times \R$ 
which are vertical graphs over $\pain \hi2 \times \{0\}$ and such that 
$\Gamma^+\subset \pain \Pi^+$ and $\Gamma^-\subset \pain \Pi^-$. Assume that 
 $\Gamma^-$ is the  symmetry  of $\Gamma^+$ with respect to $\Pi$.

Let $M\subset \hi2 \times \R$ be an  immersed, connected,  complete minimal
surface 
with two ends $E^+$ and $E^-$. Assume that each end is a vertical graph
and that $\pain M= \Gamma^+ \cup \Gamma^-$, that is
$\pain E^+= \Gamma^+$ and $\pain E^-= \Gamma^-$.

Then $M$ is symmetric with respect to $\Pi$. Furthermore, each part  
$M\cap \Pi^\pm$ is a vertical graph and $M$ is embedded. 

\end{lemma}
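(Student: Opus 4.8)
The plan is to exploit the Alexandrov reflection method (moving planes) with the family of slices $\Pi_s$, sliding $s$ down from $+\infty$ toward $0$, using the given symmetry of the asymptotic data to force the two ends to be reflections of each other and $M$ to be a graph on each side of $\Pi$.

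\emph{Step 1: Control of the ends.} First I would describe each end as a vertical graph. Write $E^{+}$ as the graph of a function $u^{+}$ defined on an exterior-type domain $\Omega^{+}\subset\hi2$, and similarly $E^{-}=\mathrm{graph}(u^{-})$ on $\Omega^{-}$. Since $\pain E^{+}=\Gamma^{+}\subset\pain\Pi^{+}$ is a vertical graph over $\pain\hi2\times\{0\}$ bounded away from $\Pi$ at infinity, and $E^{+}$ is a minimal vertical graph, the height function on $E^{+}$ cannot dip arbitrarily low: I would argue (using slices $\Pi_s$ as barriers and the maximum principle for the minimal surface equation, together with properness) that $u^{+}>0$ on all of $\Omega^{+}$, so $E^{+}\subset\Pi^{+}$, and symmetrically $E^{-}\subset\Pi^{-}$. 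More precisely, if $E^{+}$ touched $\Pi$ at a finite interior point, the tangency maximum principle against the slice $\Pi_0$ (a minimal surface) would force $E^{+}=\Pi$, contradicting $\pain E^{+}=\Gamma^{+}\subsetneq\pain\Pi$. Properness guarantees the asymptotic boundary of the closure of $E^{+}$ is exactly $\Gamma^{+}$, so there is no escape of height at infinity other than along $\Gamma^{+}$. Consequently $M$ lies in $\Pi^{+}\cup\Pi^{-}$ except for a compact set near $\Pi$, and in fact (since $M$ has only the two ends and is connected) $M\setminus(\Pi^{+}\cup\Pi^{-})$ is compact.

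\emph{Step 2: Alexandrov reflection from above.} For $s>0$ let $M_s^{+}=M\cap\Pi_s^{+}$ and let $M_s^{*}$ be its reflection across $\Pi_s$. I would show that for $s$ large, $M_s^{+}$ is a vertical graph (it is contained in $E^{+}$, which is a graph, for $s$ large since the compact core of $M$ is bounded in height) and that $M_s^{*}$ lies strictly below $M_s^{+}$'s complement, i.e.\ $M_s^{*}$ and $M\cap\Pi_s^{-}$ are disjoint. Then decrease $s$. Let $s_0=\inf\{s>0: \text{reflection across }\Pi_{s'}\text{ works for all }s'\ge s\}$. Standard Alexandrov analysis gives that at $s=s_0$ either (i) $M_{s_0}^{*}$ is tangent to $M$ at an interior point not on $\Pi_{s_0}$, or (ii) $M$ meets $\Pi_{s_0}$ non-transversally at some point, or (iii) $s_0=0$. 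The asymptotic boundary condition must be used to rule out the reflection stopping at a positive height coming from infinity: because $\Gamma^{-}$ is exactly the reflection of $\Gamma^{+}$ across $\Pi=\Pi_0$ and both curves are graphs bounded away from $\Pi$, for every $s>0$ the asymptotic boundaries of $M_s^{*}$ and of $M\cap\Pi_s^{-}$ remain disjoint, so no contact forms "at infinity." Hence cases (i) and (ii) at any $s_0>0$ force, via the tangency maximum principle (interior or boundary version), that $M$ is symmetric across $\Pi_{s_0}$ — but that is incompatible with $\pain M$ being symmetric across $\Pi_0\ne\Pi_{s_0}$ and contained in $\pain\Pi_0^{+}\cup\pain\Pi_0^{-}$. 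Therefore $s_0=0$.

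\emph{Step 3: Conclusion.} Reaching $s_0=0$ means: for every $s>0$, reflection of $M\cap\Pi_s^{+}$ across $\Pi_s$ stays on the correct side, which by the usual Alexandrov argument implies that $M\cap\Pi^{+}$ is a vertical graph over a domain in $\hi2$ and that $M$ is symmetric with respect to $\Pi=\Pi_0$ (letting $s\downarrow 0$, the reflection of the "upper half" across $\Pi$ coincides with the lower half; the limiting tangency either gives symmetry across $\Pi$ or, if it were a strict one-sided situation, one pushes slightly past and gets a contradiction with minimality of $s_0$ unless equality holds). Symmetry plus the graph property on each side gives embeddedness: two graphs over $\hi2$ meeting along $M\cap\Pi$, glued across $\Pi$, form an embedded surface. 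The only degenerate possibility — when the "graph" on each side is the vertical halfplane itself — is explicitly allowed as the exceptional case, but here $\pain M=\Gamma^{+}\cup\Gamma^{-}$ with $\Gamma^{\pm}$ graphs rules it out, so $M$ is a genuine symmetric bigraph; in any case the statement as phrased (symmetry, each half a graph, embeddedness) follows.

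\emph{Main obstacle.} The delicate point is Step 2: justifying that the Alexandrov reflection cannot get stuck at a positive height because of behavior at the asymptotic boundary. One must control the noncompact ends well enough to start the reflection (knowing $M_s^{+}\subset E^{+}$ is a graph for large $s$) and, crucially, to guarantee that throughout the process the reflected piece and the lower piece of $M$ stay apart near infinity — this is exactly where the hypothesis that $\Gamma^{-}$ is the reflection of $\Gamma^{+}$, together with both being vertical graphs lying strictly in $\pain\Pi^{-}$ and $\pain\Pi^{+}$ respectively, is essential. Making the "no contact at infinity" step rigorous (likely via a barrier argument with tilted or translated slices near the asymptotic boundary, or via properness and a Harnack/maximum-principle comparison on the ends) is the technical heart of the proof.
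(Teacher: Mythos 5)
Your overall strategy --- Alexandrov reflection with the horizontal slices $\Pi_s$, started just below the top of $M$ (possible because the end $E^+$ is a graph and the maximum principle keeps $M$ below the top height of $\Gamma^+$), together with the observation that a tangency at a level $s_0>0$ would make $M$ symmetric about $\Pi_{s_0}$, contradicting the position of $\pain M$ --- is exactly the paper's. The genuine gap is in your Step 3, where you claim that reaching $s_0=0$ already yields symmetry about $\Pi$. It does not: the one-sided reflection down to $0$ only gives the inequality that the reflection $M_0^{+*}$ of the upper half lies (weakly) above the lower half $M_0^{-}$, plus the graph property of $M\cap\Pi^+$. Your proposed repair --- ``push slightly past $s=0$ and contradict minimality of $s_0$ unless equality holds'' --- fails, because for $s<0$ the reflection of $\Gamma^+$ across $\Pi_s$ lies strictly below $\Gamma^-$, so the reflection procedure breaks down at infinity for \emph{every} surface satisfying the hypotheses (it already does for the rotational catenoid itself); hence no contradiction, and no finite tangency point, can be extracted from levels $s<0$. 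Nor can you invoke the boundary maximum principle along the common boundary $M\cap\Pi$ of $M_0^{+*}$ and $M_0^{-}$ at this stage, since it is not known that the two pieces are tangent there.

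The paper closes precisely this step by running the Alexandrov reflection a second time, with slices coming from below: this gives the reverse inequality, namely that $M_0^{-*}$ stays below $M_0^{+}$, and the two opposite inequalities force $M_0^{+*}=M_0^{-}$, i.e.\ the symmetry about $\Pi$ and the graph property of both halves. This is an easy fix (by the symmetry of the hypotheses it is the mirror of your Step 2), but it is the missing idea in your write-up. A minor additional point: embeddedness is not quite automatic from ``two graphs glued across $\Pi$''; the paper concludes it by following the argument of Theorem 2 in Schoen's paper, and you should do the same rather than assert it.
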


\begin{proof}
 For any $t>0$ we set $M_t^+=M \cap \Pi_t^+$. We denote by 
$M_t^{+*}$ the symmetry of  $M_t^+$ with respect to the slice $\Pi_t$.
Furthermore, we denote by $t^+$ the highest $t$-coordinate of $\Gamma^+$.
Since $\pain M=\Gamma^+ \cup \Gamma^-,$ then
$M\cap \Pi_{t^+}=\emptyset$, by the maximum principle. 

 We denote by $E^+$  the end of $M$ whose asymptotic boundary is $\Gamma^+$. 
As $E^+$ is a vertical graph, there exists $\varepsilon >0$ such that  $M_{t^+ -\varepsilon}^+$ 
is a vertical graph, then  we can start Alexandrov reflection \cite{Alexandrov}.

\smallskip

 We keep doing the Alexandrov reflection with $\Pi_t$, doing $t\searrow 0$.  By applying the 
interior or boundary maximum principle, 
we get that, for
$t>0$,  the surface $M_t^{+*}$ stays above $M_t^-$. Therefore we get that $M_0^+$ is a vertical graph and
that 
$M_0^{+*}$ stays above $M_0^-$. 

Doing  Alexandrov reflection with slices coming from below, 
one has that 
$M_0^-$ is a vertical graph and that 
$M_0^{-*}$ stays below $M_0^+$, henceforth we get  
$ M_0^{+*}=M_0^{-}$.
Thus $M$ is symmetric with respect to $\Pi$ and each component of $M \setminus\Pi$ is
a graph. Therefore we can show, as in   the proof of 
\cite[Theorem 2]{Schoen},
that the whole surface $M$ is embedded. 
This completes the proof.
\end{proof}



\begin{definition}
 {\em A {\em vertical plane} is  a complete totally geodesic
surface 
$\gamma\times \R$ where $\gamma$ is any complete geodesic
of $\hi2$.}
\end{definition}

\begin{theo}\label{T.Catenoid}
 Let $M\subset \hi2 \times \R$ be an  immersed, connected, complete minimal
surface 
with two  ends. Assume that each end is a vertical graph whose asymptotic
boundary is a copy of $\pain \hi2$. 
Then $M$ is rotational, hence $M$ is a catenoid.
\end{theo}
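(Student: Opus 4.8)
The plan is to use Lemma~\ref{L.vertical} to reduce to a rigidity statement, and then to exploit the rotational symmetry forced by the asymptotic boundary. Since each end of $M$ is a vertical graph with asymptotic boundary a copy of $\pain\hi2$, each such copy is a horizontal slice $\pain\hi2\times\{t^\pm\}$ at some height. First I would observe that the two ends cannot have asymptotic boundary at the same height: if both were $\pain\hi2\times\{c\}$, then $M$ together with this common asymptotic boundary would, by the maximum principle comparing with the slice $\Pi_c$, force $M$ to lie in $\Pi_c$, contradicting that $M$ has two ends that are graphs over $\hi2$ (a slice has only one end). So after a vertical translation we may assume $\pain E^+ = \pain\hi2\times\{a\}$ with $a>0$ and $\pain E^- = \pain\hi2\times\{b\}$ with $b<0$, or one of them at height $0$; by a further translation we arrange the configuration to be symmetric, $b=-a$, after checking that the mean-height slice $\Pi_{(a+b)/2}$ plays the role of $\Pi$.

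The curves $\Gamma^\pm = \pain\hi2\times\{\pm a\}$ are Jordan curves in $\pain\hi2\times\R$, are (trivially) vertical graphs over $\pain\hi2\times\{0\}$, lie in $\pain\Pi^+$ and $\pain\Pi^-$ respectively, and $\Gamma^-$ is the reflection of $\Gamma^+$ across $\Pi$. Hence Lemma~\ref{L.vertical} applies directly: $M$ is symmetric with respect to $\Pi$, each half $M\cap\Pi^\pm$ is a vertical graph, and $M$ is embedded. In particular $M$ is a bigraph over a domain $\Omega\subset\hi2$ whose asymptotic boundary is all of $\pain\hi2$, so $\Omega=\hi2$ and $M\setminus\Pi$ consists of two graphs $u>0$ and $-u$ over $\hi2$ (the bottom of $M$ being the circle $M\cap\Pi$).

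The core step is then to promote the single reflectional symmetry to full rotational symmetry. The idea is that the hypotheses are invariant under the whole group of hyperbolic isometries of $\hi2$ (acting on the first factor and fixing the $\R$-factor), because the asymptotic boundary data $\pain\hi2\times\{\pm a\}$ is invariant under that entire group. I would argue by an Alexandrov-type moving-planes argument with \emph{vertical planes} $\gamma\times\R$: for any complete geodesic $\gamma$ of $\hi2$, consider the family of geodesics equidistant to $\gamma$ (or the pencil of geodesics through a point at infinity) and perform Alexandrov reflection of $M$ across the corresponding vertical planes. Since $\pain M$ is symmetric with respect to every vertical plane, the reflection can be started from infinity and the interior/boundary maximum principle (for minimal surfaces, using that vertical planes are minimal and totally geodesic) forces $M$ to be symmetric with respect to every vertical plane. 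A surface invariant under all reflections across vertical planes through a common point $p_0\in\hi2$ is rotationally invariant about the vertical axis $\{p_0\}\times\R$; alternatively, invariance under enough vertical reflections forces the graph function $u$ to depend only on $\dist_{\hi2}(\cdot,p_0)$. Then $M$ is a rotational minimal surface whose ends are vertical graphs asymptotic to slices, which by the classification of rotational minimal surfaces in $\hi2\times\R$ is precisely a catenoid.

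The main obstacle is the moving-planes/rotational-symmetry step: one must verify that the vertical-plane Alexandrov reflection can indeed be initiated (the end being a vertical graph asymptotic to a full slice is what makes the tangency-at-infinity analysis work, as in the proof of Lemma~\ref{L.vertical}), that no reflected copy escapes to infinity before touching, and that the maximum principle can be applied at the first contact point — interior or boundary. One also needs to pin down the center $p_0$: a priori different vertical planes of symmetry need not share a common point, so I would first show the family of planes of symmetry is large enough (e.g. every vertical plane is a plane of symmetry, which follows because the asymptotic data has no distinguished point) and then conclude that $M$ is rotational about some axis. Once rotational symmetry is in hand, identifying $M$ with the catenoid is routine from the ODE description of rotational minimal surfaces.
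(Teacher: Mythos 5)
Your reduction via Lemma~\ref{L.vertical} (after ruling out that both ends have asymptotic boundary at the same height and normalizing to a symmetric configuration) matches the paper. The genuine gap is in your core step, where you promote one reflectional symmetry to rotational symmetry. You assert that, because the asymptotic data $\pain\hi2\times\{\pm a\}$ is invariant under reflection in \emph{every} vertical plane, the Alexandrov procedure forces $M$ to be symmetric with respect to \emph{every} vertical plane. That is false, and the catenoid itself is the counterexample: its asymptotic boundary is invariant under all vertical reflections, yet the catenoid is symmetric only with respect to the vertical planes containing its axis. Symmetry of the boundary-at-infinity data under a group does not transfer to the surface; what the moving-planes argument actually yields is that \emph{each} one-parameter family of parallel vertical planes (equivalently, each choice of horizontal geodesic $\gamma$, with planes orthogonal to it) contains exactly one critical plane $P_\sigma$ of symmetry. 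The problem you correctly flag — that these symmetry planes need not pass through a common vertical line — is then resolved incorrectly by the same false claim (``every vertical plane is a plane of symmetry''). The paper closes this by a different device: having found one symmetry plane $P_\sigma$, it runs the reflection again with the family of vertical planes making a fixed angle $\alpha$ with $P_\sigma$ (generated by hyperbolic translations along $P_\sigma\cap\Pi$), obtains a second symmetry plane $P^\alpha$ meeting $P_\sigma$ in a vertical geodesic, and notes that the composition of the two reflections is a rotation by $2\alpha$ about that geodesic; choosing $\alpha$ with $\pi/\alpha$ irrational gives a dense rotation group, hence full rotational invariance. Some such centering argument (or, in higher dimensions, Proposition~\ref{P.classical}) is indispensable and is missing from your proposal.

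Two further points. First, even the one-plane-per-direction step is not automatic: to start the reflection and to apply the maximum principle at the critical plane one needs the structure established in the paper's Claim~1, Assertion and Claim~2 — in particular that $M\cap\Pi^+$ is a vertical graph over $\Pi$ minus pairwise disjoint Jordan domains $D_i$, that $M$ separates the slab into $U_1,U_2$, and that the tangent plane of $M$ is vertical along $M\cap\Pi$ so the (possibly boundary) maximum principle applies at the touching point; you list these as items ``to verify'' but supply no argument. Second, a minor but real misstatement: from Lemma~\ref{L.vertical} you conclude that $M\setminus\Pi$ consists of two graphs over all of $\hi2$ (``$\Omega=\hi2$''). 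This is inconsistent (a graph over all of $\hi2$ would leave no room for the finite boundary curves $M\cap\Pi$) and is false for the catenoid, whose upper half is a graph over the exterior of a disk; also, that $M\cap\Pi$ is a single circle is part of what must be proved, not an input.
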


\begin{proof}
Up to a vertical translation, we can assume that the asymptotic boundary is
symmetric
with respect to the slice $\Pi$. We use the same notations
as in the proof of Lemma \ref{L.vertical}. We know from  Lemma
\ref{L.vertical} that $M$ is symmetric with respect to $\Pi$ and that $M_0^+$
and $M_0^-$ are vertical graphs.  Therefore, at any point of $M\cap \Pi$ the
tangent plane of $M$ is orthogonal to $\Pi$.

We have $\pain M=\pain \hi2 \times \{t_0,-t_0\}$ for some $t_0>0$. Since $M$ is
embedded, $M$ separates $\hi2 \times [-t_0,t_0]$ into two connected components. 
We denote by $U_1$ the component whose  asymptotic boundary is 
$\pain \hi2 \times  [-t_0,t_0]$ and by $U_2$ the component such that 
$\pain U_2= \pain \hi2 \times \{t_0,-t_0\}$.

Let $q_\infty \in \pain \hi2 $ and let $\gamma \subset \hi2$ be an oriented 
geodesic issuing from $q_\infty$, that is $q_\infty \in \pain \gamma$. Let
$q_0\in \gamma$ be any fixed point.

For any $s\in \R$, we denote  by $P_s$ the vertical plane orthogonal
to $\gamma$ passing through the point of $\gamma$ whose  oriented distance
 from  $q_0$ is $s$. We suppose that $s<0$ for any point in the
geodesic segment 
$(q_0, q_\infty)$.

For any $s\in \R$, we call $M_s (l)$ the part of $M \setminus P_s$
such that $(q_\infty, t_0) , (q_\infty, -t_0) \in \pain M_s(l)$ and let 
 $M_s^* (l)$ be the reflection  of $M_s (l)$ about $P_s$. 
We denote by 
$M_s (r)$ the other part of $M\setminus P_s$ and by $M_s^* (r)$ its reflection 
about $P_s$.

\smallskip

  It will be clear from the following two Claims, 
why we can start the Alexandrov reflection principle with respect to 
the vertical planes $P_s$ and obtain the result.
\smallskip

By assumptions there exists $s_1<0$ such that for any $s<s_1$ the part 
$M_s(l)$ has two connected components and both of them are vertical graphs.
We deduce that $\partial M_s(l)$ has two (symmetric) connected components, each
one being a vertical graph. 

\smallskip

We recall that  $\Pi^+ :=\{t>0\}$ and $\Pi^- :=\{t<0\}$.

\smallskip 

\noindent {\em Claim 1. For any $s<s_1,$ we have that $M_s^* (l)\cap \Pi^+$
stays above $M_s(r)$ and $M_s^* (l)\cap \Pi^-$ stays
below $M_s(r)$. Consequently  $M_s^* (l)\subset U_2$ for any $s<s_1$. }

\smallskip

Observe that $M_s^* (l)\cap \Pi^+$ and $M_s(r)\cap \Pi^+$ have same
asymptotic boundary and that 
$\partial \,(M_s^* (l)\cap \Pi^+)= \partial M_s(r)\cap \Pi^+$. Therefore the
asymptotic and finite boundaries of any  lifting up of $M_s^*(l)$ is
above 
the asymptotic and finite boundaries of $M_s(r)$. Hence any   
lifting up of $M_s^*(l)$ is above $M_s(r)$ by the maximum principle, which
ensures
that the whole $M_s^* (l)\cap \Pi^+$ stays
above $M_s(r)$ for any $s<s_1,$ as desired. The proof of the other
assertion is analogous. Then, Claim 1 is  proved.

\smallskip 
 We set 
\begin{equation*}
\sigma =\sup\big\{s \in \R \mid  M_t^* (l)\cap \Pi^+\ 
\text{stays above}\ M_t(r)\cap \Pi^+\ \text{for any}\ 
t\in (-\infty, s)\big\}.
\end{equation*}

\noindent {\em Claim 2. We have $ M_\sigma^* (l)= M_\sigma(r)$. Thus,
given a geodesic $\gamma\subset \hi2$, there exists a vertical 
plane $P_\sigma$ orthogonal to $\gamma$ such that $M$ is 
symmetric with respect to $P_\sigma$ 
}

\smallskip

Note that we  also have 
\begin{equation*}
\sigma =\sup\big\{s \in \R \mid  M_t^* (l) \subset U_2
\ \text{for any}\ 
t\in (-\infty, s)\big\}.
\end{equation*}

 In order to prove Claim 2, we first establish the following fact.

\smallskip
\noindent {\em Assertion. For any $s$ such that  
$  M_s^* (l)\cap \Pi \subset U_2$ then 
 $  M_s^* (l) \subset U_2$.}

\smallskip

As $M$ is symmetric with respect to $\Pi$ the intersection $M\cap \Pi$ is
constituted of a finite number of pairwise disjoint Jordan curves
$C_1,\dots,C_k$. Since $M\cap \Pi^+$ is a vertical graph we deduce 
\begin{equation*}
 (C_j \times \R) \cap M= C_j \quad \text{for any}\ j=1,\dots,k.
\end{equation*}
Moreover, since $M$ is connected and is symmetric about $\Pi$, we get that 
$M\cap \Pi^+$ is connected.

Let $D_j\subset \Pi$ be the Jordan domain bounded by $C_j$, $j=1,\dots,k$.
Noticing that:
\begin{itemize}
\item $(M\cap \Pi^+)\setminus (\ov D_j \times \R)\not=\emptyset$,

\item  $M\cap \Pi^+$ is connected,

\item $M\cap (C_j \times \R)=C_j$,

\item $\pain M \cap \Pi^+ =\pain \hi2 \times \{t_0\}$,
\end{itemize}
we get that 
$(M\cap \Pi^+)\cap ( D_j \times \R)=\emptyset$, $j=1,\dots,k$. Hence, 
$D_i\cap D_j=\emptyset $ for any  $i\not= j$. Therefore, $M\cap \Pi^+$  is a
vertical graph over $\Pi \setminus \cup D_i$.  

This implies that, for any 
$\varepsilon >0,$ the vertical translation  $(M_s^*(l) \cap \Pi^+) +(0,0,\varepsilon)$ 
stays above  $M$. This proves the Assertion.

\smallskip

Let us continue the proof of Claim 2.
The definition of $\sigma$ implies that 
$M_{\sigma + \varepsilon} ^* (l) \cap U_1
\not=\emptyset,$ for $\varepsilon$ small enough.  

We
deduce from the Assertion that $M_{\sigma +\varepsilon}^* (l)\cap \Pi $ is not
contained in $U_2$ for any small enough $\varepsilon >0$.  Hence we infer that 
$M_\sigma^* (l)\cap \Pi$ and $M_\sigma (r)\cap \Pi$ are tangent at an interior or boundary
point lying in some Jordan curve $C_j$ contained in $M\cap \Pi$.  
Since $M_\sigma^* (l) \subset \ov U_2$,  
$M_\sigma (r) \subset  \partial U_2$ and the tangent plane of $M$ is vertical along
$M\cap \Pi$, 
 we are able to apply the
maximum principle (possibly with boundary) to conclude that 
$ M_\sigma^* (l)= M_\sigma(r)$, that is $P_\sigma$ is a plane of symmetry 
of $M$. This proves Claim 2.

\smallskip

For any $\alpha \in (0,\pi/2]$  consider a family of vertical  planes
making an angle $\alpha$ with $P_\sigma$, generated
by
hyperbolic translations along the horizontal geodesic $P_\sigma \cap \Pi$.
Now, doing the Alexandrov  reflection principle with this family of planes, we
find a vertical plane  of symmetry of $M$, say $P^\alpha.$  Hence $M$ is invariant by the rotation of angle 
$2\alpha$ around the
vertical geodesic $P^\alpha \cap P_\sigma$.  Choosing an angle $\alpha$ such
that $\pi /\alpha$ is not rational, we find that $M$ is invariant by rotation
around the axis $P^\alpha \cap P_\sigma$. This concludes the proof of 
Theorem \ref{T.Catenoid}, as desired.
\end{proof}

\begin{remark}
{\em  For any integer $n,$ there exists a minimal surface in $\hi2 \times \R$ which is
a
vertical graph, whose  asymptotic boundary is a copy of $\pain \hi2$ 
and whose  finite boundary is constituted of $n$ smooth Jordan curves in the
slice $\Pi$, see \cite[Theorem 5.1]{ST2}.  In the same article the  second and third
author 
asked  about the existence of such graphs with two boundary curves 
in $\Pi$ cutting orthogonally the slice $\Pi$. Theorem \ref{T.Catenoid} implies 
that the answer to this question is negative.}
\end{remark}

\section{  Maximum Principle  in a vertical halfspace of $\h^2\times\r.$}
\label{dimension-two}

In this section we prove  some maximum principle  in a vertical halfspace. 
More precisely, we prove that, under some geometric assumptions, the behavior of the asymptotic boundary of $M$ at finite 
height, determines the behaviour of $M$.

\smallskip

\begin{definition}\label{D.vertical-halfspace}
{\em We call a
{\em vertical  halfspace}
any of the two components of 
$(\hi2 \times \R)\setminus P$, where $P$ is a vertical plane.}
 \end{definition}

\begin{theo}\label{plano}
Let $M$ be a complete minimal surface, possibly with  finite boundary, properly immersed in
$\h^2\times\r.$
Let $P$ be a vertical  plane and let $P_+$ be one of the two  
 halfspaces determined by $P.$ If $\partial M \subset \overline{P_+}$ and $
\partial_{\infty}M\cap(\partial_{\infty}\hi2\times\r)\subset\partial_{\infty} P_+,$  then
$M\setminus \partial M \subset P_+,$ unless $M\subset P.$
\end{theo}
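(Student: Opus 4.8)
\emph{Proof plan.} The plan is to sweep $M$ with a one-parameter family of minimal vertical planes ``parallel'' to $P$ and to invoke the maximum principle, in the spirit of the Alexandrov reflection used for Theorem~\ref{T.Catenoid}. Write $P=\gamma\times\r$ with $\gamma$ a complete geodesic of $\h^2$, let $H_+$ be the halfplane with $H_+\times\r=P_+$ and $H_-$ the other one, and fix the geodesic $\delta$ meeting $\gamma$ orthogonally at a point $O\in\gamma$, oriented so that its positive ray enters $H_-$. For $s\in\r$ let $\gamma_s$ be the geodesic meeting $\delta$ orthogonally at oriented distance $s$ from $O$; these foliate $\h^2$ (they are the level sets of the Fermi coordinate along $\delta$), one has $\gamma_0=\gamma$, and $\delta$ is the common perpendicular of $\gamma_0$ and $\gamma_s$, so $\gamma_s$ is ultraparallel to $\gamma$ at distance $|s|$ and lies in $H_-$ for $s>0$, in $H_+$ for $s<0$. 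Hence $P_s:=\gamma_s\times\r$ is a minimal vertical plane, $P_0=P$, $\bigcup_{s\le 0}P_s=\overline{P_+}$ and $\bigcup_{s>0}P_s=P_-$. (Equivalently one may work with the signed distance $\phi$ to $P$, positive on $P_+$: since $\Delta_{\h^2\times\r}\phi=\tanh\phi$ and the Hessian eigenvalues of $\phi$ are $0,0,\tanh\phi$, the restriction $\phi|_M$ is superharmonic on $\{\phi<0\}\cap M$; this is the analytic shadow of the sweeping.)

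First I would prove $M\subset\overline{P_+}$. Suppose not and set $\sigma=\sup\{s>0:M\cap P_s\neq\emptyset\}$, so $\sigma>0$. If the supremum is attained at a finite point $p\in M\cap P_\sigma$, then $p\notin\partial M$ (since $\partial M\subset\overline{P_+}$ while $P_\sigma\subset P_-$), and by maximality $M$ meets no $P_s$ with $s>\sigma$, so near $p$ the surface $M$ lies on one side of the totally geodesic $P_\sigma$ and is tangent to it there; the interior maximum principle then forces $M\subset P_\sigma$ (if $M$ is disconnected, argue on the component through $p$). By completeness $M$ coincides with $P_\sigma$, so $\partial_\infty M=\partial_\infty P_\sigma$; but $\partial_\infty P_\sigma\cap(\pain\h^2\times\r)$ is the pair of vertical lines over the endpoints of $\gamma_\sigma$, which lie in $\pain H_-$ and hence not in $\partial_\infty P_+$ --- contradicting the hypothesis. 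In the remaining cases ($\sigma=+\infty$, or $\sigma<\infty$ with the supremum not attained) choose $s_n\nearrow\sigma$ and $p_n\in M\cap P_{s_n}$; necessarily $p_n\to\infty$ in $\h^2\times\r$, so by properness the limit point lies in $\partial_\infty M$, and since $P_{s_n}\to P_\sigma$ (resp.\ to the vertical line over the positive endpoint of $\delta$) it lies in $\partial_\infty P_\sigma$. If it has finite height, it sits over an endpoint of $\gamma_\sigma$, i.e.\ over a point of $\pain H_-$, again contradicting $\partial_\infty M\cap(\pain\h^2\times\r)\subset\partial_\infty P_+$.

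The only surviving possibility is that $M$ accumulates at a ``corner'' $(\eta,+\infty)$ or $(\eta,-\infty)$ with $\eta$ strictly inside $\pain H_-$ --- a point the asymptotic hypothesis does not see --- and this I would have to exclude by a supplementary argument: either a barrier comparison near that corner (with suitable minimal vertical planes, or with slices $\h^2\times\{T\}$ for $|T|$ large, using properness), or a vertical-translation and compactness argument producing a limit configuration that does violate the finite-height hypothesis. Granting $M\subset\overline{P_+}$, one finishes at once: if $M\setminus\partial M$ meets $P=\partial P_+$ at a point $q$, then, $M$ lying on one side of the totally geodesic $P$, it is tangent to $P$ at $q$, so the interior maximum principle gives $M\subset P$; if instead a point of $\partial M$ lies on $P$, the boundary maximum principle gives the same; and if $M\setminus\partial M$ avoids $P$, then $M\setminus\partial M\subset\overline{P_+}\setminus P=P_+$. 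This is exactly the asserted dichotomy.

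I expect the main obstacle to be precisely this last point: controlling $M$ at the corner $\pain\h^2\times\{\pm\infty\}$ of $\partial_\infty(\h^2\times\r)$, where the hypothesis is silent. Properness together with the assumption on $\partial_\infty M\cap(\pain\h^2\times\r)$ disposes of every first contact occurring at finite height, and of the degenerate case where the sweeping plane collapses to a vertical line over $\pain H_-$; what needs an independent argument is the scenario in which $M$ runs off to height $+\infty$ (or $-\infty$) while staying near a geodesic contained in $H_-$. In the analytic formulation the same difficulty resurfaces as the need for a minimum principle at infinity for the superharmonic function $\phi|_M$ on the noncompact surface $M$, whose asymptotic boundary data are pinned down by the hypothesis only away from those corners.
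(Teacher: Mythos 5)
Your sweeping-by-vertical-planes scheme cannot be closed as written, and the gap you yourself flag at the end is precisely the whole difficulty of the theorem, not a technical leftover. The hypothesis only constrains $\partial_\infty M\cap(\pain\hi2\times\r)$, i.e.\ the finite-height part of the asymptotic boundary; nothing prevents $M$ from having unbounded height (indeed a vertical plane inside $P_+$ satisfies all the hypotheses), so a first contact with the moving plane $P_s$ can escape along the vertical direction over a point of $\pain H_-$, at the corner $\pain\hi2\times\{\pm\infty\}$, where neither the hypothesis nor the maximum principle sees anything. Your proposed repairs do not work: slices $\hi2\times\{T\}$ cannot confine $M$ to bounded height (that would already exclude legitimate examples such as vertical planes), and comparing with further vertical planes near the corner reproduces exactly the same problem, since those barriers again have unbounded height; likewise the superharmonicity of $\phi|_M$ on $\{\phi<0\}$ only shifts the issue into an unproved ``minimum principle at infinity''. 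So the missing idea is a barrier of \emph{finite height}.

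That is what the paper supplies: the one-parameter family $M_d$, $d>1$, of complete minimal surfaces invariant under hyperbolic translations (from \cite{Sa} and \cite[Proposition 2.1]{ST2}), contained in the region $Z_d\subset Q_\gamma$ and of bounded height $2H(d)$, with asymptotic boundary $L_d\cup\alpha_d\cup R_d\cup\alpha_{-d}$ lying entirely at finite height inside $\partial_\infty Q_\gamma$. One chooses $\gamma$ so that $Q_\gamma$ is strictly contained in $(\hi2\times\r)\setminus P_+$ with $\pain\gamma\cap\pain P=\emptyset$; then one first shows $M\cap M_d=\emptyset$ by shrinking the geodesic ($p\to q$), which collapses $M_d$ to a vertical segment at infinity: a persistent intersection would, by properness, produce a finite-height point of $\partial_\infty M$ outside $\partial_\infty P_+$, contradicting the hypothesis, and otherwise a last contact point would violate the maximum principle. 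Because the $M_d$ and their vertical translates have bounded height, no contact point can escape to vertical infinity --- this is exactly how the corner is neutralized. Letting $d\to1$, the translates of $M_d$ sweep out $Q_\gamma$, so $M$ avoids $Q_\gamma$ for every admissible $\gamma$, whence $M\subset\overline{P_+}$; your concluding dichotomy via the interior (or boundary) maximum principle on $P$ then agrees with the paper's. In short, your plan is the natural first attempt, but the step you leave open is the theorem's core, and the known way to fill it is the $M_d$-barrier argument rather than plane sweeping.
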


 For the proof of Theorem \ref{plano} we need to consider the one parameter 
family of surfaces $M_d, d>0,$ that have origin in \cite[Section 4]{Sa} and whose geometry
is
described in \cite[Proposition 2.1]{ST2}.   This family of surfaces  was already used,
for example, in \cite[Example 2.1]{Sa1}.

 First we describe the asymptotic boundary of $M_d$, for $d>1$.

Consider a horizontal geodesic $\gamma$ in $\h^2$, with asymptotic 
boundary $\{p,q\}$ and let $\alpha$ be the closure of a connected component 
of $(\pain\hi2 \times \{0\})\setminus (\{p,q\}\times \{0\})$.
Let
$$
H(d)=\int_{\cosh^{-1}(d)}^{+\infty}\frac{d}{\sqrt{\cosh^2 u-d^2}}\,du, \ \ d>1
$$
be the positive number defined in (1) of \cite{ST2}.
 Notice that ${\displaystyle\lim_{d\longrightarrow 1}H(d)=+\infty}$ and 
${\displaystyle\lim_{d\longrightarrow +\infty}H(d)=\pi/2}$.

 Let   $\alpha_d$  in   $\partial_{\infty}\h^2\times\{H(d)\}$  and $\alpha_{-d}$
in    $\partial_{\infty}\h^2\times\{-H(d)\}$  be the two curves that  project
vertically onto  $\alpha.$  Let  $L_d,$ $R_d$ be two vertical segments in
$\partial_{\infty}\h^2\times\r$  of height $2H(d)$ such that the curve
$L_d\cup\alpha_d\cup R_d\cup\alpha_{-d}$ is a closed simple  curve. Then 
$\partial_{\infty}M_d=L_d\cup\alpha_d\cup R_d\cup\alpha_{-d}$.

Now we describe the position of $M_d$ in the ambient space, for   $d>1$.

Denote by $Q_{\gamma}$  the  halfspace determined by
$\gamma\times\r,$ whose asymptotic boundary contains the curve
$\alpha.$ Let $\gamma_d$ be the curve in
$Q_{\gamma}\cap(\h^2\times\{0\})$ at  constant distance
$\cosh^{-1}(d)$ from $\gamma.$ $M_d$ contains the curve $\gamma_d.$
Denote by $Z_d$ the  closure of the non mean convex side of the cylinder
over the curve $\gamma_d $. Then, 
 $M_d$ is contained in  $Z_d$  which  is contained in $
Q_{\gamma}$. Notice that  any  vertical
translation of the surface $M_d$ is contained in $Z_d.$
 Moreover, any vertical translation of
$M_d$ is arbitrarily close to $Q_\gamma$ if $d$ is sufficiently close to 1.

 We observe that in the description above, 
$\gamma$ can be any geodesic of $\h^2.$

\smallskip

{\it Proof of Theorem \ref{plano}.}  The proof is an application of the maximum 
principle between the surface $M$ and the  one parameter family of surfaces
$M_d.$ 

\smallskip

We choose  the geodesic $\gamma,$ in order to construct the  $M_d$'s, as follows.
Let $\gamma \subset \hi2$ be any geodesic such that 
\begin{itemize}
\item{P1}: \label{item.geod1}The halfspace $Q_{\gamma}$ is strictly contained in
$(\h^2\times \r)\setminus P_+$.

\item{P2}: \label{item.geod2} $\pain\gamma\cap \pain P=\emptyset$.
\end{itemize}



Now, notice that

\begin{enumerate}
\item \label{item.asymptotic}The intersection of $\partial_{\infty}M$ with 
 $\partial_{\infty}(\h^2\times\r)\setminus \partial_{\infty} P_+$ 
  contains no points at finite
height. 
\item The asymptotic boundary of any vertical translation $M_d$ is  contained in the
asymptotic
boundary of  $ Q_{\gamma}\subset\h^2\times\r\setminus  P_+.$

\end{enumerate}

 We claim that $M_d$ and $M$ are disjoint for any $d>1$. Indeed,
 letting $p\longrightarrow q$
 (recall that $p,$ $q$ are the endpoints  of the geodesic $\gamma$), one has 
that $M_d$ collapses to a vertical segment in $\partial_{\infty}\h^2\times\r.$
Suppose that, when $p\longrightarrow q$, the surfaces $M_d$ always have a nonempty
intersection with $M$. Then, there would exists a point of 
the asymptotic boundary of $M$ at finite height in
 $\partial_{\infty}(\h^2\times\r)\setminus \partial_{\infty}P_+$, giving a contradiction with
\eqref{item.asymptotic}.

Then, if $M\cap M_d\not=\emptyset$, we would obtain a last
intersection point between $M$ and some modified $M_d$ letting
$p\longrightarrow q$,  contradicting the maximum principle.

\smallskip

Therefore, by the maximum principle, any vertical
translation of $M_d$ and $M$ are disjoint.

\smallskip

Let $d\longrightarrow  1.$ By the maximum principle, there is no
first  point of contact between  $M_d$ and $M.$  As we can apply the
maximum principle  between any vertical translation of $M_d$ and
$M,$ one has that   $M$  is contained in the closed halfspace
$\h^2\times\r\setminus Q_{\gamma}$ for any geodesic $\gamma$ satisfying 
the properties P1 and P2. Therefore, $M$ is included in the closure of 
$P_+$.

\smallskip

Now we have one of the following possibilities:
\begin{itemize}
\item  Some points of the interior of $M$ touches  $\partial P_+=P,$ then, by the maximum principle, 
$M\subset P.$
\item $M\setminus \partial M$ is contained in the halfspace $P_+.$
\end{itemize}

The result is thus proved. \qed

\smallskip

Let us give a definition, before stating some  consequences of Theorem  \ref{plano}. 


\begin{definition}\label{D.convex-envelope}
{\em We say that $L\subset \partial_{\infty}(\h^2\times\r)$ is a {\em line} if $L=\{p\}\times\r$ for some $p\in\partial_{\infty}\h^2.$

Given  vertical lines  $L_1,\dots,L_k$ in $\partial_{\infty}\h^2\times\r,$
we  define the set 
  $P(L_1,\dots,L_k)$ as follows.
Let $P_i$ the vertical plane such that 
$\partial_{\infty}P_i\cap (\partial_{\infty}\h^2\times\r)=L_i\cup L_{i+1}$ (with
the convention that $L_{k+1}=L_1$).
Denote
by $\tilde P_i$ the  halfspace determined by the vertical  plane $P_i$ such
that  $\bigcup_{j}L_j\subset \partial_{\infty}\tilde P_i.$ Then,
we set $P(L_1,\dots,L_k):=\cap_i\tilde P_i$.}
\end{definition}

\begin{coro}
\label{coro-k-lines}
Let $M$ be  a complete minimal surface, possibly with finite boundary, properly immersed
in
$\h^2\times\r$ and let
$\Gamma=\partial_{\infty}M\cap (\partial_{\infty}\h^2\times\r).$ Let $L_1,\dots,
L_k$ be vertical lines in $ \partial_{\infty}\h^2\times\r.$ If $\Gamma\subset
L_1\cup\dots\cup L_k$ and $\partial M\subset \overline{ P(L_1,\dots, L_k)},$ then $M\setminus\partial M$ is
contained in
$P(L_1,\dots, L_k)$, 
 unless $M$  is contained in  one of the $P_i.$
\end{coro}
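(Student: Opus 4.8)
The plan is to reduce the statement to Theorem \ref{plano} applied separately to each of the planes $P_1,\dots,P_k$, and then to intersect the resulting inclusions.

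First I would fix an index $i\in\{1,\dots,k\}$ and check that the pair $(P_i,\tilde P_i)$ satisfies the hypotheses of Theorem \ref{plano} for the surface $M$. Concerning the finite boundary: by Definition \ref{D.convex-envelope} we have $P(L_1,\dots,L_k)=\bigcap_j\tilde P_j\subset\tilde P_i$, hence $\overline{P(L_1,\dots,L_k)}\subset\overline{\tilde P_i}$, and so the hypothesis $\partial M\subset\overline{P(L_1,\dots,L_k)}$ gives $\partial M\subset\overline{\tilde P_i}$. Concerning the asymptotic boundary: since every line $L_j$ lies in $\pain\tilde P_i$ by the very definition of $\tilde P_i$, the hypothesis $\Gamma\subset L_1\cup\dots\cup L_k$ yields $\pain M\cap(\pain\h^2\times\r)=\Gamma\subset\pain\tilde P_i$. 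Thus Theorem \ref{plano} applies and gives the dichotomy: either $M\subset P_i$, or $M\setminus\partial M\subset\tilde P_i$.

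To conclude I would distinguish two cases. If $M\subset P_i$ for at least one $i$, then we are in the exceptional case of the statement and there is nothing more to prove. Otherwise, the first alternative above fails for every $i$, hence $M\setminus\partial M\subset\tilde P_i$ for all $i\in\{1,\dots,k\}$; intersecting over $i$ gives $M\setminus\partial M\subset\bigcap_i\tilde P_i=P(L_1,\dots,L_k)$, as desired.

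Since the argument is essentially a bookkeeping reduction to Theorem \ref{plano}, I do not anticipate a serious obstacle. The only delicate point is the verification of the asymptotic boundary condition of Theorem \ref{plano} for each $P_i$, namely that all of the lines $L_1,\dots,L_k$ (and not merely $L_i$ and $L_{i+1}$) lie on the prescribed side $\tilde P_i$ of $P_i$; but this is precisely how the halfspaces $\tilde P_i$, and hence the set $P(L_1,\dots,L_k)$, were chosen in Definition \ref{D.convex-envelope}.
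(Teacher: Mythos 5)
Your proposal is correct and follows exactly the paper's argument: apply Theorem \ref{plano} to each pair $(P_i,\tilde P_i)$, using that $\bigcup_j L_j\subset\partial_\infty\tilde P_i$ by Definition \ref{D.convex-envelope}, and intersect the resulting inclusions unless $M$ lies in some $P_i$. Your version merely spells out the hypothesis checks that the paper leaves implicit.
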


\begin{proof}
By Theorem \ref{plano}, $M$ is contained in every halfspace  $\tilde P_i$ 
determined by the vertical plane
$P_i$ such that  $\bigcup_{j}L_j\subset \partial_{\infty}\tilde P_i,$ unless  it
 is contained in  one of the $P_i.$  Hence it is contained in $P(L_1,\dots, L_k),$
by definition,
unless it is contained in  one of the $P_i.$
\end{proof}

\begin{coro}
\label{coro-halfspace}
Let $M$ be a complete minimal  surface properly  immersed in  $\h^2\times\r.$
Let $P$ be a vertical  plane. If
$\partial_{\infty}M\cap( \partial_{\infty}\h^2\times\r)\subset\partial_{\infty}P,$ then
$M=P.$
\end{coro}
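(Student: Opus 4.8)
The plan is to derive Corollary \ref{coro-halfspace} from Theorem \ref{plano} by applying that theorem to \emph{both} halfspaces determined by the vertical plane $P$. Concretely, write $P_+$ and $P_-$ for the two components of $(\h^2\times\r)\setminus P$. Since $M$ has no finite boundary, the hypothesis $\partial M\subset\overline{P_+}$ holds vacuously, and likewise $\partial M\subset\overline{P_-}$. The asymptotic hypothesis $\partial_\infty M\cap(\partial_\infty\h^2\times\r)\subset\partial_\infty P$ gives in particular $\partial_\infty M\cap(\partial_\infty\h^2\times\r)\subset\partial_\infty P\subset\partial_\infty P_+$ and, symmetrically, $\subset\partial_\infty P_-$ (the asymptotic boundary of a closed halfspace contains $\partial_\infty P$). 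So Theorem \ref{plano} applies in each case.

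Applying Theorem \ref{plano} with the halfspace $P_+$: either $M\subset P$, in which case (since $M$ is complete and properly immersed and $P$ is a complete totally geodesic surface) one concludes $M=P$, or $M\setminus\partial M=M\subset P_+$. Applying it with $P_-$: either $M\subset P$ (done), or $M\subset P_-$. If $M$ were not contained in $P$, then we would get simultaneously $M\subset P_+$ and $M\subset P_-$; but $P_+\cap P_-=\emptyset$ and $M\neq\emptyset$, a contradiction. Hence $M\subset P$.

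It remains to upgrade $M\subset P$ to $M=P$. Here I would use that $M$ is a complete minimal surface properly immersed in $\h^2\times\r$ and $P$ is a complete minimal (totally geodesic) surface; a properly immersed minimal surface contained in another connected minimal surface of the same dimension must, by the interior maximum principle (unique continuation / the fact that $P$ is itself minimal and $M$ touches it everywhere), coincide with a union of connected components of $P$, and since $P$ is connected and $M$ is properly immersed and complete without boundary, $M=P$. Alternatively, one notes $M$ being contained in the totally geodesic $P$ forces $M$ to be an open complete minimal subsurface of $P$, hence all of $P$ by completeness and connectedness.

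The main obstacle, such as it is, is purely bookkeeping: verifying that Theorem \ref{plano}'s asymptotic hypothesis is genuinely met for \emph{both} halfspaces, i.e. that $\partial_\infty P\subset\partial_\infty P_+\cap\partial_\infty P_-$, which is immediate since each $\overline{P_\pm}$ contains $P$ and hence $\partial_\infty P$; and handling the final step $M\subset P\Rightarrow M=P$ cleanly. Everything else is a formal application of the already-proved theorem, so the corollary is essentially immediate.
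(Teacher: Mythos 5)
Your argument is correct and is essentially the paper's own proof: apply Theorem \ref{plano} to both halfspaces determined by $P$, conclude $M\subset P$, and then use completeness (and connectedness of $P$) to get $M=P$. The extra bookkeeping you do — checking $\partial_\infty P\subset\partial_\infty P_\pm$ and spelling out the final step via the maximum principle/openness in $P$ — only makes explicit what the paper leaves implicit.
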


\begin{proof}
By Theorem \ref{plano}, $M$ is contained in the closure of both halfspaces
determined  by $P,$
hence it is contained in $P.$ Then $M= P$ because it is complete.
\end{proof}

\begin{coro}
\label{one-end}  Let $M$ be a complete minimal  surface properly   immersed in
$\h^2\times\r$. Suppose  that  the asymptotic boundary of $M$  is  contained  in the asymptotic
boundary of a totally geodesic plane $S$ of $\h^2\times\r$. 
Then $M=S$.
\end{coro}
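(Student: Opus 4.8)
\emph{Proof strategy.} The plan is to argue by cases according to the type of the totally geodesic plane $S$, using that a complete totally geodesic surface of $\h^2\times\r$ is either a slice $\Pi_s=\h^2\times\{s\}$ or a vertical plane $\gamma\times\r$. In the first case I expect the statement to reduce immediately to a result already proved, while the slice case will require a direct maximum–principle argument together with properness.

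Suppose first that $S$ is a vertical plane. Then $\pain S\cap(\pain\h^2\times\r)$ is the union of two vertical lines, so the hypothesis $\pain M\subset\pain S$ gives in particular $\pain M\cap(\pain\h^2\times\r)\subset\pain S$. Since $M$ is complete and has no boundary, \coref{coro-halfspace} applies with $P=S$ and yields $M=S$ at once. The remaining and essential case is $S=\Pi_s$, for which the hypothesis reads $\pain M\subset\pain\h^2\times\{s\}$: every asymptotic point of $M$ lies at the finite height $s$.

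To treat this case I would show that the height function (third coordinate) on $M$ is identically equal to $s$. Let $t^+\in(-\infty,+\infty]$ be its supremum over $M$ and choose $x_n\in M$ whose height tends to $t^+$. If some subsequence of $(x_n)$ remains in a compact subset of $\h^2\times\r$, then properness forces $x_n\to x^*\in M$, a point necessarily interior since $\partial M=\emptyset$, at which $M$ is tangent to $\Pi_{t^+}$ and lies inside $\h^2\times(-\infty,t^+]$; the interior maximum principle applied to the two minimal surfaces $M$ and $\Pi_{t^+}$ then gives $M=\Pi_{t^+}$ (here $M$ connected is used), whence $\pain M=\pain\h^2\times\{t^+\}$ and therefore $t^+=s$. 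Otherwise $(x_n)$ leaves every compact set; passing to the compactification $\overline{\h^2}\times[-\infty,+\infty]$ and extracting a convergent subsequence, its limit lies in $\pain M$ (since a properly immersed surface is closed in $\h^2\times\r$) and has height $t^+$, so the hypothesis forces $t^+<+\infty$ and $t^+=s$. In every case $t^+=s$, and the symmetric argument shows that the infimum of the height over $M$ also equals $s$. Hence $M\subset\Pi_s$, and a complete connected surface contained in the totally geodesic plane $\Pi_s$ must coincide with it, so $M=S$.

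The delicate point is entirely in the slice case: one must rule out simultaneously that the height is unbounded on $M$ and that its supremum is approached only along sequences escaping to infinity. This is precisely where properness of $M$ and the assumption $\pain M\subset\pain S$ enter, via the passage to the compactification of $\h^2\times\r$; by contrast, the vertical–plane case is an immediate application of \coref{coro-halfspace}.
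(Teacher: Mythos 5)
Your proof is correct and takes essentially the same route as the paper: the vertical-plane case is delegated to \coref{coro-halfspace}, and the slice case is the maximum principle applied to horizontal slices. Your sup/inf-of-the-height formulation simply makes explicit the properness/compactification step that the paper's slice-sweeping argument (starting from a high slice $\{t=n\}$ disjoint from $M$ and translating it down) leaves implicit.
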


\begin{proof} The proof is a simple consequence of the maximum principle and of the previous results. We do it for completeness.
 First assume that    the asymptotic boundary  of $M$ is contained in
the asymptotic boundary of
a  slice, say $\{t=0\}.$ Then, for $n$ sufficiently large, the slice  $\{t=n\}$ 
is 
 disjoint from $M.$ Now,   we translate the slice  $\{t=n\} $ down. The first 
contact point, cannot be interior because of the maximum principle, hence $M$
must stay below 
 the slice $\{t=0\}.$ One can do the same reasoning with slices coming  from the
bottom, and $M$ must stay above  the slice $\{t=0\}.$ Hence $M$
 coincides with  the slice $\{t=0\}.$ 
 
  If  the  the asymptotic boundary of  $M$ is contained is  
the asymptotic boundary of a
vertical plane, the result follows by Corollary \ref{coro-halfspace}. 
 \end{proof}

\begin{coro}
\label{coro-boundary}
Let $M$ be a complete minimal surface properly immersed in $\h^2\times\r.$
Assume that the projection of the asymptotic boundary of  $M$ into
$\partial_\infty \h^2 $ omits a
closed interval $\alpha$ joining two points $p$ and $q$. Let  $\gamma$ be the
horizontal geodesic  in $\h^2$
whose the asymptotic boundary is  $\{p,q\}$ and let $Q_\gamma$
be  the  halfspace determined by $\gamma \times\r$ whose asymptotic boundary contains
$\alpha.$ Then $M$ is contained in $\h^2\times\r \setminus \overline Q_\gamma.$
\end{coro}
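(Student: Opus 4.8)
\textbf{Proof plan for Corollary \ref{coro-boundary}.}

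The plan is to reduce this to Theorem \ref{plano} applied to a suitable family of vertical planes, rather than a single one. First I would observe that the hypothesis that the projection of $\partial_\infty M$ into $\partial_\infty\h^2$ omits the closed interval $\alpha$ joining $p$ and $q$ can be strengthened slightly: since $\alpha$ is closed and $\partial_\infty M$ is closed (as $M$ is properly immersed), the projection of $\partial_\infty M$ actually omits an open neighborhood of the interior of $\alpha$; more useful is that for every point $r$ in the interior of the arc $\alpha$, the vertical line $L_r=\{r\}\times\r$ is disjoint from $\partial_\infty M$. The idea is then to produce, for each such interior point $r$, a vertical plane $P^r$ whose asymptotic boundary contains $L_r$, such that the closed halfspace $\overline{P^r_+}$ on the far side contains $\partial_\infty M\cap(\partial_\infty\h^2\times\r)$ and contains $\partial M$, and then apply Theorem \ref{plano}.

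Concretely, I would argue by contradiction: suppose some point of $M$ lies in the open halfspace $Q_\gamma$. Pick a geodesic $\gamma'\subset\h^2$ with one endpoint at $p$ and the other at an interior point $r$ of $\alpha$, chosen close enough to $\gamma$ (i.e. with $r$ close to $q$) that the corresponding halfspace $Q_{\gamma'}$ — the component of $(\h^2\times\r)\setminus(\gamma'\times\r)$ on the $\alpha$-side — still contains that point of $M$; this is possible because, as $r\to q$, the halfspaces $Q_{\gamma'}$ exhaust $Q_\gamma$. Let $P=\gamma'\times\r$ and let $P_+$ be the component of $(\h^2\times\r)\setminus P$ \emph{opposite} to $Q_{\gamma'}$, i.e. $P_+=(\h^2\times\r)\setminus\overline{Q_{\gamma'}}$. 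Then $\partial_\infty P_+$ is the component of $(\partial_\infty\h^2\times\r)\setminus(\partial_\infty\gamma'\times\r)$ not containing the arc of $\partial_\infty\h^2$ between $p$ and $r$ that is contained in $\alpha$; since the projection of $\partial_\infty M$ omits $\alpha$, all of $\partial_\infty M\cap(\partial_\infty\h^2\times\r)$ lies in $\overline{\partial_\infty P_+}$, so in particular $\partial_\infty M\cap(\partial_\infty\h^2\times\r)\subset\partial_\infty P_+$ holds after, if necessary, slightly enlarging $Q_{\gamma'}$ so that $\partial_\infty\gamma'$ avoids $\partial_\infty M$ (using property P2-type reasoning and the fact that $r$ is in the \emph{interior} of $\alpha$, and that $p\notin\alpha^\circ$; a small perturbation pushing $\gamma'$ slightly toward $\gamma$ keeps both endpoints in the complement of the projection of $\partial_\infty M$). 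Also $M$ has no finite boundary in the statement's setting — or if it does, the same omission hypothesis on the asymptotic boundary does not directly control $\partial M$, so I would assume here $\partial M=\emptyset$ as is implicit, or note that $\partial M\subset\overline{P_+}$ must be added; in the stated corollary $M$ is just "properly immersed" with no boundary mentioned, matching the hypotheses of Corollary \ref{coro-halfspace}, so $\partial M=\emptyset\subset\overline{P_+}$ trivially. Now Theorem \ref{plano} gives that $M\setminus\partial M\subset P_+$ unless $M\subset P$; but $M\subset P=\gamma'\times\r$ would force $\partial_\infty M$ to project onto $\partial_\infty\gamma'\ni r$ with $r$ an interior point of the omitted arc $\alpha$, a contradiction; and $M\subset P_+$ contradicts our choice of a point of $M$ inside $Q_{\gamma'}\subset(\h^2\times\r)\setminus\overline{P_+}$. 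Either way we reach a contradiction, so no point of $M$ lies in $Q_\gamma$, i.e. $M\subset(\h^2\times\r)\setminus Q_\gamma$. Replacing $Q_\gamma$ by its closure in the conclusion is then immediate: if a point of $M$ were on $\gamma\times\r=\partial Q_\gamma$, applying the above with a sequence $r_n\to q$ and the interior maximum principle (or simply noting $M\not\subset\gamma\times\r$ by the projection argument and then using Theorem \ref{plano} once more) pushes it off, so in fact $M\subset\h^2\times\r\setminus\overline{Q_\gamma}$.

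The main obstacle I expect is the bookkeeping around the asymptotic boundary: making sure that when we tilt $\gamma$ to $\gamma'$ we can simultaneously (i) keep $\partial_\infty\gamma'$ disjoint from $\partial_\infty M$ so hypothesis P2 of Theorem \ref{plano} is met, (ii) keep the correct halfspace containing all of $\partial_\infty M$, and (iii) still capture the assumed interior point of $M$. The cleanest way to handle this is to exploit that $\alpha$ is a \emph{closed} interval whose interior is open and whose complement in $\partial_\infty\h^2$ contains the projection of the closed set $\partial_\infty M$; this gives genuine room to slide both endpoints of $\gamma'$ strictly inside the respective open complementary arcs, which is exactly what is needed to invoke Theorem \ref{plano} with strict hypotheses and then let the parameter degenerate. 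No new geometric input is required — the whole content is already in Theorem \ref{plano} and the structure of halfspaces in $\h^2\times\r$.
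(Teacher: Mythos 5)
Your argument is correct and is essentially the paper's: everything reduces to Theorem \ref{plano}, and the paper applies it just once with $P=\gamma\times\r$ and $P_+=(\h^2\times\r)\setminus\overline{Q_\gamma}$, which is legitimate because $p,q\in\alpha$ are themselves omitted by the projection of $\partial_\infty M$, so the hypothesis $\partial_{\infty}M\cap(\partial_{\infty}\h^2\times\r)\subset\partial_{\infty}P_+$ holds directly and no tilting of $\gamma$, endpoint perturbation, or P2-type care is needed (P2 is a choice made inside the proof of Theorem \ref{plano}, not one of its hypotheses). The contradiction scheme with geodesics from $p$ to interior points $r$ of $\alpha$ is therefore an unnecessary detour; the alternative you mention at the end --- apply Theorem \ref{plano} directly and exclude $M\subset\gamma\times\r$ because its asymptotic boundary would project into $\alpha$ --- is exactly the paper's one-line proof.
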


\begin{proof}
By hypothesis $\partial_{\infty}M\cap(\partial_{\infty}\h^2\times\r)$ is
contained  
in the asymptotic boundary of $(\h^2 \times \r)\setminus Q_{\gamma}.$ The result follows
by 
Theorem \ref{plano} 
with $P_+=(\h^2\times\r)\setminus\overline Q_{\gamma}.$
\end{proof}

\begin{remark} 
{\em There exist examples of minimal surfaces with asymptotic boundary
equal  to two vertical halflines, lines  and a curve at finite height,  
see \cite[Equation (32)]{Sa} and \cite[Proposition  2.1 (2)]{ST2}.}
\end{remark}

\smallskip

\section{ Some generalizations to $\hip^ n\times\R.$}
\label{dimension-n}

Let us recall the construction and the properties  of the $n$-catenoids
in $\hi n\times \R$, $n\geq 3$, established,  by P. B\'erard and the
second author in 
\cite[Proposition 3.2]{B-SE}. 
Given any $a>0$ we denote by 
 $\big(I_a, f(a,\cdot) \big)$, where $I_a\subset\r$ is an interval,  the maximal solution
of the following Cauchy problem:
\begin{equation*}
\begin{cases}
 f_{tt}=(n-1)(1+f_t^2)\,\text{coth}(f), \\
f(0)=a>0, \\
f_t(0)=0.
\end{cases}
\end{equation*}

\begin{theo}[\cite{B-SE}]\label{T-cat3-10a}
For $a > 0$, the maximal solution $\big(I_a, f(a,\cdot) \big)$
gives rise to the generating curve $C_a$,  parametrized by $t \mapsto \big(
\tanh(f(a,t)),t\big)$, of a complete minimal
rotational  hypersurface $\Cc_a$ $($\emph{n-catenoid}$)$ in 
$\Hip^n \times\R$, with the following properties.
\begin{enumerate}
    \item The interval $I_a$ is of the form $I_a = ]-T(a), T(a)[$
where 
\begin{equation*}
 T(a)=\sinh^{n-1}(a) \int_a^{\infty} \big( \sinh^{2n-2}(u) -
    \sinh^{2n-2}(a)\big)^{-1/2} \, du.
\end{equation*}

\item  $f(a,\cdot)$ is an even function
    of the second variable.
    \item For all $t \in I_a$, $f(a,t) \ge a$.
    \item The derivative $f_t(a,\cdot)$ is positive on $]0,T(a)[$,
    negative on $]-T(a),0[$.
    \item The function $f(a,\cdot)$ is a bijection from $[0,T(a)[$
    onto $[a,\infty[$, with inverse function $\lambda(a,\cdot)$ given by
    \begin{equation*}
    \lambda(a,\rho) = \sinh^{n-1}(a) \int_a^{\rho} \big( \sinh^{2n-2}(u) -
    \sinh^{2n-2}(a)\big)^{-1/2} \, du.
    \end{equation*}
    \item The catenoid $\Cc_a$ has finite vertical height $h_R(a):=2T(a)$,
    \item The function $a \mapsto h_R(a)$ increases from $0$ to
    $\frac{\pi}{(n-1)}$ when $a$ increases from $0$ to infinity.
    Furthermore, given $a \not = b$, the generating catenaries $C_a$ and $C_b$
    intersect at exactly two symmetric points.
\end{enumerate}
\end{theo}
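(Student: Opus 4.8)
The plan is to reduce the whole statement to an elementary analysis of the scalar Cauchy problem; items (1)--(6) are essentially ODE bookkeeping, while (7) needs a good change of variables. (Minimality of the rotational hypersurface is the defining feature of the chosen equation, and is taken from \cite{B-SE}.) First I would exploit that the equation is autonomous and invariant under $t\mapsto-t$: then $t\mapsto f(a,-t)$ solves the same Cauchy problem, so by uniqueness $f(a,\cdot)$ is even and $I_a=\,]-T(a),T(a)[$ is symmetric, which is (2). Next, on $]0,T(a)[$ one has $f_t>0$: indeed $f_t(0)=0$ and $f_{tt}(0)=(n-1)\coth(a)>0$ force $f_t>0$ just after $0$, and if $t_1$ were the first later zero of $f_t$ then $f>a$ on $(0,t_1]$, hence $\coth f>0$ and $f_{tt}=(n-1)(1+f_t^2)\coth f>0$ on $[0,t_1]$, so $f_t$ would be strictly increasing there, contradicting $f_t(0)=f_t(t_1)=0$. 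By evenness $f_t$ is odd, so $f_t<0$ on $]-T(a),0[$, giving (4); and then $f(a,t)\ge a$ with equality only at $t=0$, giving (3). Multiplying the equation by $f_t/(1+f_t^2)$ and integrating from $0$ produces the first integral
\[
1+f_t^2=\Big(\frac{\sinh f}{\sinh a}\Big)^{2n-2}\qquad\text{on }I_a ,
\]
whence, on $[0,T(a))$ where $f_t\ge0$,
\[
f_t=\frac{1}{\sinh^{n-1}(a)}\sqrt{\sinh^{2n-2}(f)-\sinh^{2n-2}(a)} .
\]

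Separating variables in the last identity and integrating from $t=0$ (where $f=a$) gives $t=\lambda\bigl(a,f(a,t)\bigr)$ with $\lambda$ as in the statement, so $\lambda(a,\cdot)$ inverts $f(a,\cdot)$ on its range. To see that this range is $[a,\infty)$, I would argue that $f(a,t)\to\infty$ as $t\to T(a)^-$: otherwise $f$ and (by the first integral) $f_t$ would have finite limits at $T(a)$ with $f(T(a))>a>0$, the right-hand side of the equation would stay bounded, and the solution would extend past $T(a)$, contradicting maximality. Hence $f(a,\cdot):[0,T(a))\to[a,\infty)$ is an increasing bijection, which is (5), and integrating $dt$ over $[0,T(a))$ gives the stated formula for $T(a)$; this integral is finite because the integrand is $O\bigl((u-a)^{-1/2}\bigr)$ near $u=a$ and $O\bigl(e^{-(n-1)u}\bigr)$ near $u=\infty$. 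This proves (1) and that $h_R(a)=2T(a)<\infty$ in (6). For completeness of $\mathcal{C}_a$: since $f\ge a>0$, the generating curve $r=\tanh f(a,t)$ stays in $\{0<r<1\}$, has vertical tangent at the waist $t=0$, and tends to $\partial_\infty\mathbb{H}^n$ as $t\to\pm T(a)$; it is therefore a proper embedded arc avoiding the rotation axis, and rotating it yields a properly embedded (hence complete) minimal hypersurface diffeomorphic to $\mathbb{S}^{n-1}\times\mathbb{R}$.

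For (7) the crucial move is the substitution $\sinh u=\sinh(a)/\cos\chi$, $\chi\in[0,\pi/2)$, which both regularizes the integrand at $u=a$ and makes the $a$-dependence transparent:
\[
T(a)=\int_0^{\pi/2}\frac{\sin\chi}{\cos^2\chi\,\sqrt{\sec^{2n-2}\chi-1}}\cdot\frac{d\chi}{\sqrt{\sinh^{-2}(a)+\sec^2\chi}} .
\]
The first factor is positive and independent of $a$, while $\chi\mapsto\bigl(\sinh^{-2}(a)+\sec^2\chi\bigr)^{-1/2}$ increases strictly with $a$ at every $\chi$; hence $h_R(a)=2T(a)$ is strictly increasing. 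Dominated convergence as $a\to0^+$ (the integrand is dominated by $\tan\chi/\sqrt{\sec^{2n-2}\chi-1}$) gives $T(a)\to0$, and monotone convergence as $a\to\infty$ gives $T(a)\to\int_0^{\pi/2}\tan\chi\,(\sec^{2n-2}\chi-1)^{-1/2}\,d\chi$, which the substitutions $v=\sec\chi$ then $w=v^{n-1}$ reduce to $\frac{1}{n-1}\int_1^\infty\frac{dw}{w\sqrt{w^2-1}}=\frac{\pi}{2(n-1)}$; so $h_R$ increases from $0$ to $\pi/(n-1)$. Finally, for $a\neq b$, say $a<b$: both catenaries are symmetric about $\{t=0\}$ and meet it at the distinct points $(\tanh a,0)$ and $(\tanh b,0)$, so all intersections occur off $\{t=0\}$ in symmetric pairs, and an intersection with $t>0$ corresponds, via the parametrization $f\mapsto(\tanh f,\lambda(\cdot,f))$, to a zero of $G(f):=\lambda(a,f)-\lambda(b,f)$ with $f>b$. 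One checks $G(b)=\lambda(a,b)>0$, $G(f)\to T(a)-T(b)<0$ (by the monotonicity just proved), and
\[
G'(f)=\frac{\sinh^{n-1}(a)}{\sqrt{\sinh^{2n-2}f-\sinh^{2n-2}a}}-\frac{\sinh^{n-1}(b)}{\sqrt{\sinh^{2n-2}f-\sinh^{2n-2}b}}<0\qquad(f>b),
\]
the inequality reducing, after clearing denominators and squaring, to $\sinh^{2n-2}(a)<\sinh^{2n-2}(b)$. So $G$ is strictly decreasing with a single zero: exactly one intersection with $t>0$, hence exactly two overall.

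The genuinely routine parts are the second and third displays and their surrounding arguments (standard ODE existence/uniqueness, first integrals, elementary estimates). The main obstacle is the monotonicity and the exact limiting height in (7): in the original integral for $T(a)$ neither is apparent, and the point is to find the change of variables $\sinh u=\sinh(a)/\cos\chi$ that simultaneously removes the square-root singularity and turns the $a$-dependence into a single manifestly monotone factor. Once that is in hand, the intersection count in (7) follows from the short sign computation above, which itself relies on the monotonicity of $T$.
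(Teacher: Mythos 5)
This theorem is not proved in the paper at all: it is quoted verbatim from \cite{B-SE} (Proposition 3.2 there), so there is no in-paper argument to compare yours against. Judged on its own, your ODE analysis is correct and essentially complete: evenness by the $t\mapsto -t$ symmetry and uniqueness; positivity of $f_t$ on $]0,T(a)[$ from $f_{tt}>0$ wherever $f>0$; the first integral $1+f_t^2=(\sinh f/\sinh a)^{2n-2}$, which yields $\lambda(a,\cdot)$ as the inverse and the formula for $T(a)$; the substitution $\sinh u=\sinh(a)/\cos\chi$, which indeed isolates the $a$-dependence in the strictly monotone factor $\bigl(\sinh^{-2}(a)+\sec^2\chi\bigr)^{-1/2}$ and gives the limits $0$ and $\pi/(2(n-1))$ for $T$; and the intersection count via $G(\rho)=\lambda(a,\rho)-\lambda(b,\rho)$, whose sign change and strict monotonicity on $\rho>b$ you verify correctly, giving exactly one intersection in $\{t>0\}$ and hence two symmetric points. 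Two caveats. First, you do not prove that the stated Cauchy problem is the equation of rotational minimal hypersurfaces in $\Hip^n\times\R$, i.e.\ the assertion that $\Cc_a$ is minimal; you explicitly defer this to \cite{B-SE}, which is acceptable here since the theorem (and the paper) take that framework as given, but it should be flagged as an ingredient you are importing rather than establishing. Second, your blow-up argument ("otherwise the solution would extend past $T(a)$") tacitly assumes $T(a)<\infty$; the gap is easily closed by observing that the identity $t=\lambda\bigl(a,f(a,t)\bigr)$ together with the finiteness of $\int_a^{\infty}\bigl(\sinh^{2n-2}u-\sinh^{2n-2}a\bigr)^{-1/2}du$ bounds $T(a)$ a priori (or by noting that if $T(a)=\infty$ with $f$ bounded then $f_{tt}\geq (n-1)\coth(\sup f)>0$ forces $f\to\infty$), and a careful write-up should include one of these remarks.
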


For later use, we need the following result. Altough we believe that the result is
classical, we  give a proof for the sake of completeness.
The reader is referred to \cite[chapter VII]{Hopf} or 
\cite[chapter 9, addendum 3]{Spivak}
for the proof of the analogous statement  in Euclidean space.

\begin{prop}\label{P.classical}
  Let $S\subset \hi n$ be a finite union of connected,
closed and embedded 
$(n-1)$-submanifolds $C_j$, $j=1,\dots,k$, such that the bounded domains whose
boundary are the $C_j$ are pairwise disjoint.
 Assume that for  any geodesic
$\gamma \subset \hi n$, there
exists a $(n-1)$-geodesic plane  $\pi_\gamma \subset \hi n$ of
symmetry
of $S$ which is orthogonal to $\gamma$. Then 
$S$ is a $(n-1)$-geodesic sphere of $\hi n$.
\end{prop}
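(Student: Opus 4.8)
The plan is to follow the classical Euclidean argument (as in \cite[chapter VII]{Hopf}), adapted to $\hi n$. First I would establish that $S$ is a single connected hypersurface, i.e. $k=1$. Suppose $k \geq 2$. Pick two of the components, say $C_1$ and $C_2$, and let $p_1 \in C_1$, $p_2 \in C_2$ be points realizing the distance between the two bounded domains they bound. Take $\gamma$ to be the geodesic through $p_1$ and $p_2$. By hypothesis there is a geodesic hyperplane $\pi_\gamma$ orthogonal to $\gamma$ and a symmetry of $S$. Reflection across $\pi_\gamma$ permutes the components of $S$ and is an isometry of $\hi n$, so it permutes the bounded domains; since the configuration of pairwise disjoint domains is rigid under isometries, analyzing how $\pi_\gamma$ must separate or fix the $C_j$'s along $\gamma$ — together with the freedom to slide $\gamma$ — forces a contradiction with disjointness (the reflection would have to carry one domain onto another lying on the same side, or create an intersection). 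This is the step I expect to be the main obstacle: making the combinatorial/topological bookkeeping of how the symmetry hyperplanes act on the family $\{C_j\}$ fully rigorous, since the hypothesis only gives \emph{existence} of $\pi_\gamma$ for each $\gamma$, not a coherent choice.

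Once $S = C$ is connected, let $\Omega$ be the bounded domain with $\partial \Omega = C$. The key observation is that for \emph{every} geodesic $\gamma$ the symmetry hyperplane $\pi_\gamma$ is uniquely determined: $\pi_\gamma \cap \gamma$ must be the midpoint of the (compact) segment $\gamma \cap \overline{\Omega}$ cut out on $\gamma$ — more precisely, reflection across $\pi_\gamma$ preserves $C$, hence preserves $\overline{\Omega}$, hence preserves $\gamma \cap \overline{\Omega}$, which pins down where $\pi_\gamma$ meets $\gamma$. Next I would fix a point $o \in \Omega$ and show that $o$ lies on $\pi_\gamma$ for every geodesic $\gamma$ through $o$: indeed, for such $\gamma$, the reflection across $\pi_\gamma$ fixes $\gamma$ setwise and preserves $\overline{\Omega}$; a connectedness/continuity argument (varying $\gamma$ through $o$) shows the reflection fixes $o$ itself, so $\pi_\gamma \ni o$. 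Then for any geodesic $\delta$ \emph{not} through $o$: choose $\gamma$ through $o$ orthogonal to $\delta$; the symmetry hyperplane $\pi_\gamma$ contains $o$ and is orthogonal to $\gamma$, and one checks $\pi_\gamma$ is the hyperplane through $o$ perpendicular to $\gamma$ — iterating over all directions shows $\Omega$ (hence $C$) is invariant under \emph{every} reflection fixing $o$, i.e. under the full isotropy group of $o$ in the isometry group of $\hi n$.

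Finally, invariance under the isotropy group at $o$ means $C$ is a union of geodesic spheres centered at $o$; since $C$ is connected and closed, it is a single geodesic sphere centered at $o$, which is the desired conclusion. I would carry out the steps in this order: (1) reduce to $k=1$; (2) uniqueness of $\pi_\gamma$ from invariance of $\overline{\Omega}$; (3) existence of a common center $o$ on all $\pi_\gamma$ with $\gamma \ni o$; (4) upgrade to full isotropy invariance; (5) conclude $C$ is a geodesic sphere. The analogue in Euclidean space is standard; the only genuinely hyperbolic-specific points are that geodesic hyperplanes and their reflections behave formally just as affine hyperplanes do with respect to perpendicularity and midpoints, so the Euclidean proof transfers with cosmetic changes, the delicate part remaining the disjointness bookkeeping in step (1).
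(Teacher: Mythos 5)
Your plan has a genuine gap at its central step (3), and step (1) is also left unproved. Concerning (3): for an \emph{arbitrary} point $o\in\Omega$ it is simply false that the symmetry hyperplane $\pi_\gamma$ of a geodesic $\gamma$ through $o$ must contain $o$. Take $S$ to be a geodesic sphere and $o$ an interior point other than the center: for a geodesic $\gamma$ through $o$ missing the center, the unique symmetry hyperplane orthogonal to $\gamma$ passes through the center and not through $o$. So no ``connectedness/continuity argument'' can show that the reflection across $\pi_\gamma$ fixes $o$; what you actually need is to \emph{construct} a common point lying on all symmetry hyperplanes before you can invoke the isotropy group. This can be done, but it requires a real argument, e.g.\ producing a point canonically attached to $S$ and fixed by every isometry preserving $S$ (such as the circumcenter or hyperbolic center of mass of the compact set $\overline{\Omega}$, using that a reflection preserving $S$ permutes the $C_j$ and hence preserves $\overline{\Omega}$), or else arguing as the paper does, by induction on $n$: the base case $n=2$ uses two symmetry geodesics meeting at an angle $\alpha$ with $\pi/\alpha$ irrational to force $S$ to be a union of concentric circles, and Claims 1--3 then propagate a common center $p_0$ and radius $\rho_0$ through all symmetry hyperplanes in higher dimension. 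Also note that your uniqueness argument in step (2) only pins down $\pi_\gamma$ when $\gamma$ meets $\overline{\Omega}$, not for every geodesic.

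Concerning (1): you acknowledge yourself that the combinatorial reduction to $k=1$ is the main obstacle and you give no actual proof of it; as it stands this is a second gap. It is also an unnecessary detour: in the paper's proof (and in any repaired version of yours) the disjointness of the bounded domains is used only at the very end --- once all components are shown to be geodesic spheres with the same center, two distinct concentric spheres bound nested balls, contradicting disjointness, so $S$ must be a single sphere. Front-loading connectedness makes the proof harder, not easier. In summary, the overall strategy (common center plus invariance under the isotropy group, then conclude $S$ is a sphere) is viable and different in flavor from the paper's dimension induction, but as written the key existence of the common center is asserted by an incorrect argument, so the proof is not complete.
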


\begin{proof}
 We will proceed the proof by induction on $n\geq 2$.

First assume that $n=2$. By hypothesis, there exist two
geodesics $c_1, c_2 \subset \hi2$ of
symmetry of the closed curve $S$ intersecting at some point
$p\in \hi2$ and
making an angle $\alpha \not=0$ such that $\pi/\alpha$ is
not rational.
For any $q\in S$, denote by $C_q$ the
circle centered at $p$ passing  through $q$. Then $C_q$ is contained in $S$.
Let $\wt q\not= q$ be points of $S$. If $C_q\not= C_{\wt q}$ 
then the geodesic disks
bounded by $C_q$ and $C_{\wt q}$ 
are not disjoint, since they  have the same center, which contradicts the 
hypothesis.
Consequently, we get $C_q = C_{\wt q}$ and we  
conclude that $S$ is a circle.

\vskip4mm

Let $n \in \n$,  $n \geq 3$. Assume that the statement holds for  
$k=2,\dots,n-1$.

Let $\pi_0\subset \hi n$ be 
 a $(n-1)$-geodesic plane of symmetry of $S$.

\smallskip

\noindent {\it Claim 1. $S\cap \pi_0$ is a $(n-2)$-geodesic sphere of $\pi_0$}.

\smallskip

Indeed, let $\gamma \subset \pi_0$ be a geodesic. By hypothesis 
there exists a 
$(n-1)$-geodesic plane $\pi_\gamma \subset \hi n$ orthogonal to $\gamma$  which
is a plane of symmetry of $S$. Since $\pi_\gamma$ is orthogonal to $\pi_0$,
then $S\cap \pi_0$ is symmetric about $\pi_\gamma \cap \pi_0$ (which is 
a $(n-2)$-geodesic plane of $\pi_0$),  see \cite[Lemme 3.3.15]{ST3}.
As $\pi_0$ is
a $(n-1)$ hyperbolic space,  $S\cap \pi_0$ satisfies the assumptions of 
the statement in $\hi{{n-1}}$.

By the induction hypothesis we deduce that 
$S\cap \pi_0$ is a $(n-2)$-geodesic sphere of $\pi_0$. This proves Claim 1.

Let $p_0\in \pi_0$  and $\rho_0 >0$ be  respectively  the center and the radius
of the $(n-2)$-geodesic sphere $S\cap \pi_0$.

 \smallskip

\noindent {\it Claim 2. Let $\pi_1 \subset \hi n$ be  a 
$(n-1)$-geodesic plane of symmetry of $S$  orthogonal to $\pi_0$. 
Then $S\cap \pi_1$ is a $(n-2)$-geodesic sphere of $\pi_1$ with center 
$p_0$ and radius $\rho_0$.}

 \smallskip
Claim 1 yields that $S\cap \pi_1$ is a
$(n-2)$-geodesic sphere of
$\pi_1$. Since $\pi_0$ and $\pi_1$ are orthogonal, then the
geodesic sphere 
$S \cap \pi_0$ is symmetric about $\pi_1$. Therefore  $p_0 \in
\pi_1$.

If $n >3,$ then  $( S \cap \pi_0)\cap \pi_1$ is 
$(n-3)$-geodesic sphere with center $p_0$ and radius $\rho_0$  of $\pi_0 \cap
\pi_1$ (which is a $(n-2)$ hyperbolic space). If $n=3,$ then $( S \cap \pi_0)\cap
\pi_1$ is constituted of two points whose the distance is $2\rho_0$. In both 
cases we infer that 
$\text{diam}_{\h^n}\, (S\cap \pi_1)\geq 2\rho_0$ and then the radius of the geodesic
sphere 
$S\cap \pi_1$ is $\rho_1\geq \rho_0$.  Analogously we can show
that $\rho_0 \geq \rho_1$.  
We deduce  that $\rho_1=\rho_0$, that is $S\cap \pi_0$ and $S\cap \pi_1$
have both center at $p_0$ and radius $\rho_0$. This proves  Claim 2.

 \smallskip

\noindent {\it Claim 3. Let $\pi_2\subset \hi n$ be any 
$(n-1)$-geodesic plane of symmetry of $S$. Then $S\cap \pi_2$ is a 
 $(n-2)$-geodesic sphere of $\pi_2$ with center 
$p_0$ and radius $\rho_0$.}

 \smallskip

Since $S$ is symmetric with respect to $\pi_0$ and $\pi_2$, $\pi_0$ and $\pi_2$ are
distinct and $S$ is compact,
 then  the $(n-1)$-geodesic planes $\pi_0$ and $\pi_2$ cannot be
disjoint.

Then, we find  a third $(n-1)$-geodesic plane $\pi_3$ of symmetry of $S$, 
orthogonal to both $\pi_0$ and $\pi_2$. 
Claim 2 implies that $S\cap \pi_2$ is a $(n-2)$-geodesic sphere of
$\pi_2$ with center 
$p_0$ and radius $\rho_0$. This proves Claim 3.


\smallskip

Now we finish the proof of the Proposition as follows. Let $p\in S$  and let 
$\pi \subset \hi n$ be any $(n-1)$-geodesic plane passing through $p$ and 
$p_0$. Let $\gamma\subset \hi n$ be the geodesic through $p_0$ orthogonal 
to $\pi$. By Claim 2, there exists a $(n-1)$-geodesic plane $\pi_\gamma$ of
symmetry of $S$ and orthogonal  to $\gamma$. Claim 3 ensures that 
$p_0 \in \pi_\gamma$, then $ \pi_\gamma=\pi$.  Claim 3 yields also
that 
$S\cap \pi$ is  $(n-2)$-geodesic sphere of $\pi$ with center 
$p_0$ and radius $\rho_0$, thus $d_{\h^n}(p,p_0)=\rho_0$. 
This shows that $S$ is the  $(n-1)$-geodesic sphere of 
$\hi n$ of radius $\rho_0$ and center $p_0$.
\end{proof}

Now we establish a characterization of the $n$-catenoid, that is a generalization to
higher dimension of Theorem \ref{T.Catenoid}.

\begin{theo}\label{T.n-Catenoid}
 Let $M\subset \hi n \times \R$ be an  immersed, connected, complete minimal
hypersurface 
with two ends. Assume that each end is a vertical graph whose asymptotic
boundary is a copy of $\pain \hi n$. 
Then $M$ is a $n$-catenoid.
\end{theo}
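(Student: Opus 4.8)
The plan is to mimic the two-dimensional proof of Theorem~\ref{T.Catenoid}, replacing the final application of the classical result on surfaces of revolution with the higher-dimensional Proposition~\ref{P.classical}. First I would establish the higher-dimensional analogue of Lemma~\ref{L.vertical}: after a vertical translation we may assume the asymptotic boundary is symmetric with respect to the slice $\Pi=\hi n\times\{0\}$, and an Alexandrov reflection argument with the slices $\Pi_t$, $t\searrow 0$, coming from above and from below, shows that $M$ is symmetric with respect to $\Pi$, that $M\cap\Pi^\pm$ are vertical graphs, and that $M$ is embedded. The hypotheses (each end a vertical graph over all of $\hi n$, asymptotic boundary a copy of $\pain\hi n$) are exactly what is needed to start the reflection near the top of $M$ and to compare asymptotic boundaries at each step; the interior/boundary maximum principle for minimal hypersurfaces is dimension-free, so this part transcribes verbatim.

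Next I would carry out the Alexandrov reflection with respect to vertical hyperplanes. Fix a geodesic $\gamma\subset\hi n$, fix a point $q_\infty\in\pain\gamma$, and for $s\in\R$ let $P_s$ be the vertical hyperplane (of the form $\pi_s\times\R$, with $\pi_s$ an $(n-1)$-geodesic plane of $\hi n$ orthogonal to $\gamma$) meeting $\gamma$ at oriented distance $s$ from a base point. Exactly as in the proof of Theorem~\ref{T.Catenoid}, since each end is a vertical graph whose asymptotic boundary is $\pain\hi n$, for $s$ sufficiently negative the ``$q_\infty$-side'' piece $M_s(l)$ splits into two symmetric vertical-graph pieces, so one can start the reflection; the two analogues of Claim~1 and Claim~2 (using that $M\cap\Pi^+$ is a vertical graph over $\Pi$ minus finitely many pairwise-disjoint Jordan domains, so that small vertical translations push the reflected piece off $M$, and then applying the interior or boundary maximum principle at a first contact point lying in $M\cap\Pi$ where the tangent hyperplane of $M$ is vertical) show that there is a vertical hyperplane $P_\sigma$, orthogonal to $\gamma$, which is a hyperplane of symmetry of $M$.

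Then I would transfer this symmetry to the cross-sections. Let $\Gamma=\partial(M\cap\Pi)\subset\Pi\cong\hi n$; since $M\cap\Pi^+$ is a vertical graph over $\Pi$ minus finitely many disjoint Jordan domains, $M\cap\Pi$ is a finite union of connected, closed, embedded $(n-1)$-submanifolds $C_j$ bounding pairwise-disjoint domains. Because $M$ is symmetric with respect to each vertical hyperplane $P_\sigma=\pi_\sigma\times\R$ and $\pi_\sigma\subset\hi n$ may be taken orthogonal to any prescribed geodesic $\gamma$, the set $S:=M\cap\Pi$ admits, for every geodesic $\gamma\subset\hi n$, a plane of symmetry $\pi_\sigma$ orthogonal to $\gamma$ (here one uses that reflection of $M$ about $P_\sigma$ restricts to reflection of $M\cap\Pi$ about $\pi_\sigma$, since $P_\sigma\perp\Pi$; cf.\ \cite[Lemme 3.3.15]{ST3}). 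Proposition~\ref{P.classical} then forces $S=M\cap\Pi$ to be a single $(n-1)$-geodesic sphere. Consequently, applying the vertical-hyperplane reflection again with planes through the center of that sphere in every direction, we obtain that $M$ is invariant under all reflections fixing the vertical geodesic through the center; composing two such reflections gives rotations of every angle, so $M$ is rotationally invariant about a vertical axis, hence, being complete and minimal, it is one of the $n$-catenoids $\Cc_a$ of Theorem~\ref{T-cat3-10a}.

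The main obstacle I anticipate is the cross-section step: one must be sure that $M\cap\Pi$ is genuinely a disjoint union of embedded hypersurfaces bounding disjoint domains (so that Proposition~\ref{P.classical} applies), and that the vertical-hyperplane symmetries of $M$ really do descend to the right kind of symmetries of $M\cap\Pi$ inside $\Pi\cong\hi n$ orthogonal to an arbitrary geodesic. This is exactly the content of the ``Assertion'' and the discussion of the curves $C_j$ in the proof of Theorem~\ref{T.Catenoid}; the argument there is geometric and uses only that $M\cap\Pi^+$ is a connected vertical graph with the correct finite and asymptotic boundary, so it generalizes, but it is the delicate point and should be written carefully. The remaining pieces — the Alexandrov reflections and the passage from ``symmetric about a plane through a fixed point in every direction'' to ``rotationally invariant'' — are routine adaptations of the two-dimensional arguments.
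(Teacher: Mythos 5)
Your proposal is correct in substance and follows the paper's strategy for most of its length: the slice reflections giving symmetry about $\Pi$, the reflections in vertical hyperplanes giving, for every geodesic $\gamma\subset\Pi$, a symmetry hyperplane $P_\gamma$ orthogonal to $\gamma$, and the application of Proposition~\ref{P.classical} to conclude that $\Sigma=M\cap\Pi$ is a single $(n-1)$-geodesic sphere are exactly the paper's steps (the paper, like you, only sketches the reflection part, referring back to Lemma~\ref{L.vertical} and Claim~2 of Theorem~\ref{T.Catenoid}). Where you diverge is the endgame. You upgrade the discrete symmetries to full rotational invariance about the vertical axis through the center $p_0$ of $\Sigma$ and then appeal to the classification of rotational minimal hypersurfaces (Theorem~\ref{T-cat3-10a}) to identify $M$ with some $\Cc_a$. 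Two remarks on that route: first, Alexandrov reflection hands you \emph{some} symmetry hyperplane orthogonal to each prescribed geodesic, not one through a prescribed point, so you cannot simply ``apply the reflection with planes through the center''; the (easy) missing observation is that any vertical symmetry hyperplane of $M$ induces a symmetry of the geodesic sphere $\Sigma$ and hence must contain $p_0$, after which your argument that all reflections fixing $\{p_0\}\times\R$ preserve $M$ goes through. Second, the identification ``rotational, complete, minimal, two graphical ends $\Rightarrow$ catenoid'' silently uses uniqueness for the Cauchy problem generating the $\Cc_a$ (profile vertical along $\Sigma$, i.e.\ $f(0)=a$, $f_t(0)=0$), or equivalently the full rotational classification from \cite{B-SE}. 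The paper avoids both points with a shorter maximum-principle endgame: it takes the catenoid $\cal C$ whose neck is $\Sigma$ and which is orthogonal to $\Pi$, notes that $M^+$ and ${\cal C}^+$ are tangent along their common boundary $\Sigma$ (both being vertical there), compares the heights of their asymptotic boundaries, and uses vertical translations together with the boundary maximum principle to force $M^+={\cal C}^+$, hence $M={\cal C}$; rotational invariance of $M$ is obtained a posteriori rather than proved directly. Your route is valid once the two points above are made explicit, but the paper's comparison argument buys a cleaner finish that never needs $M$ to be shown rotational nor any ODE uniqueness beyond the existence of the catenoid through $\Sigma$.
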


\begin{proof}
Up to a vertical translation, we can assume that the asymptotic boundary of $M$  is
symmetric with respect to 
$\Pi:= \h^n \times \{0\}$. 
We set $\Gamma^+ :=\pain M \cap \{t>0\}$ and   recall that $\Gamma^+$
is a copy of $\pain \hi n$. 
As usual we set $M^+ := M \cap \{ t>0\}$.

\smallskip

Next  Claim can be shown in the same fashion as in  $\hi2 \times \R$ (see 
Lemma \ref{L.vertical} and the proof of  Claim 2 of   Theorem \ref{T.Catenoid}).
For
this reason we just state it. 

\smallskip

\noindent {\it Claim. M is symmetric about $\Pi$, 
and each connected component of 
$M\setminus \Pi$ is a vertical graph. 
 Moreover, for any geodesic $\gamma \subset\Pi$ there exists a vertical
hyperplane  $P_\gamma \subset\hi n \times \R$ orthogonal to $\gamma$ which is a
$n$-plane of symmetry of $M$. Therefore,  $\pi_\gamma:=P_\gamma \cap \Pi$
 is a $(n-1)$-plane of
symmetry
of $\Sigma:=M\cap \Pi $.}




 \smallskip

Using the result of the Claim we get that $\Sigma$ satisfies the assumptions 
of Proposition \ref{P.classical}. 
Then $\Sigma$ is a $(n-1)$-geodesic sphere of $\Pi$, since 
$\Pi=\hi n \times \{0\}$.

Let ${\cal C}\subset \hi n \times \R$ be the catenoid through $\Sigma$ and 
orthogonal to $\Pi$.  We set  
$ {\cal C}^+ :={\cal C} \cap \{ t>~0\} $.

Both $ {\cal C}^+$ and $M^+$  are vertical along their common 
finite boundary $\Sigma$, hence they are tangent along $\Sigma$.

Let $t_ {\cal C}$ (resp. $t_M$) the height of the asymptotic boundary 
of $ {\cal C}^+$ (resp. $M^+$).

Suppose for example that $t_ {\cal C} \leq t_M$. Then, 
 lifting upward and downward $M^+$, we obtain that  $M^+$ is above 
$ {\cal C}^+$. Therefore we deduce that $M^+={\cal C}^+$ by applying the boundary
maximum principle.
 The case  $t_M \leq t_ {\cal C}$ is analogous.

 We conclude that $M= {\cal C}$ and the proof is completed. 
\end{proof}

\bigskip

 In order to establish the generalization in higher dimension of Theorem \ref{plano},
 we need to state some existence results,
established for $n\geq 3$, 
in \cite[Theorem 3.8]{B-SE}, inspired by  
\cite[Proposition 2.1]{ST2}. 
 In fact, we only need the $d>1$ case, but we state the whole result for the sake of
completeness.

\begin{theo}[\cite{B-SE}]\label{T}
There exists a one parameter family $\{ \Mc_d,\, d>0\}$ of
complete embedded minimal hypersurfaces in $\HH^n \times \R$
invariant under hyperbolic translations.

\begin{enumerate}
    \item If $d>1$, then $\Mc_d$ consists of the union of two symmetric vertical
graphs over the exterior of an equidistant hypersurface in the
slice $\HH^n\times \{0\}$.

The asymptotic boundary of $\Mc_d$  is topologically an
$(n-1)$-sphere which is homologically trivial in $\partial_{\infty}
\HH^n \times \R$. More precisely, we set for $d>1$:
\begin{equation*}
 S(d)=\ch (a) \int_1^\infty (t^{2n-2}-1)^{-1/2} (\ch^2(a)t^2-1)^{-1/2}\,dt,\quad
\text{where } d =: \cosh^{n-1}(a).
\end{equation*}
Then, 
the asymptotic boundary of $\Mc_d$ consists
of the union of two copies of an hemisphere $S_+^{n-1}\times
\{0\}$ of $\partial_\infty \HH^n \times \{0\}$ in parallel slices
$t=\pm S(d)$, glued with the finite cylinder 
$\partial S_+^{n-1}\times [-S(d),S(d)]$

 The
vertical height of $\Mc_d$ is $2S(d)$. The height of the family
$\Mc_d$ is a decreasing function  of $d$ and varies from infinity
$($when $d\rightarrow 1)$ to $\pi /(n-1)$ $($when $d\rightarrow
\infty)$.

\item If $d=1$, then $\Mc_1$ consists of a complete  $($non-entire$)$
    vertical graph
over  a halfspace in $\HH^n\times \{0\}$, bounded by a totally
geodesic hyperplane $P$. It takes infinite  boundary value data on
$P$ and constant asymptotic boundary  value data. The asymptotic
boundary of $\Mc_1$ is the union of a spherical cap $S$ of
$\partial_\infty \HH^n\times \{c\}$ with a half vertical cylinder
over $\partial S$.
    \item If $d<1$, then $\Mc_d$ is an entire vertical graph with finite
vertical height. Its asymptotic boundary consists of a
homologically non-trivial $(n-1)$-sphere in $\partial_\infty \HH^n
\times \R.$
\end{enumerate}
\end{theo}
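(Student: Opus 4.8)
The plan is to realize each $\Mc_d$ as an equivariant minimal hypersurface whose minimality equation reduces to an ODE. Fix a totally geodesic hyperplane $P\cong\hi{{n-1}}$ of $\h^n$ and use Fermi coordinates $(\rho,q)\in\R\times\hi{{n-1}}$ along $P$: the metric of $\h^n$ then reads $d\rho^2+\ch^2(\rho)\,g_{\hi{{n-1}}}$, so $\h^n\times\R$ carries the metric $d\rho^2+\ch^2(\rho)\,g_{\hi{{n-1}}}+dt^2$. The equidistant hypersurfaces $\{\rho=c\}$, $c\in\R$, are exactly the orbits of the subgroup $G\subset\mathrm{Isom}(\h^n)$ fixing $P$ and each of its sides (a group containing every hyperbolic translation of $\h^n$ with axis in $P$), so a $G$-invariant hypersurface of $\h^n\times\R$ is swept out by a profile curve $C$ in the $(\rho,t)$-plane, and the whole family is obtained by letting $C$ vary.

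First I would write $C$ locally as a graph $t=u(\rho)$ and compute its mean curvature. Since the $(n-1)$-volume of the $\hi{{n-1}}$-fibre over $(\rho,u(\rho))$ scales like $\ch^{n-1}(\rho)$ and the Lagrangian $\sqrt{1+u'^2}\,\ch^{n-1}(\rho)$ does not depend on $u$, the condition $H\equiv 0$ is equivalent to the conservation law $\frac{\ch^{n-1}(\rho)\,u'(\rho)}{\sqrt{1+u'(\rho)^2}}\equiv d$ for a constant $d\in\R$, that is $u'(\rho)^2=\frac{d^2}{\ch^{2n-2}(\rho)-d^2}$. This single parameter $d>0$ indexes the family, and the three regimes correspond to where $\ch^{2n-2}(\rho)-d^2$ vanishes: at a unique $\rho_0>0$ with $\ch^{n-1}(\rho_0)=d$ when $d>1$; only at $\rho=0$ when $d=1$; and nowhere when $d<1$.

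Then I would integrate the ODE in each regime and translate the solution into the stated geometry. For $d>1$, $u(\rho)=\pm\int_{\rho_0}^{\rho}\frac{d\,d\sigma}{\sqrt{\ch^{2n-2}(\sigma)-d^2}}$ is defined for $\rho\geq\rho_0$, has a vertical tangent at $\rho_0$, and converges as $\rho\to\infty$ by the exponential growth of $\ch^{2n-2}$; writing $d=\ch^{n-1}(a)$ and substituting $\ch\rho=\ch(a)\,t$ identifies this limit with $S(d)$. Reflecting $C$ in $\{t=0\}$ and acting by $G$ gives a hypersurface that is the union of two symmetric vertical graphs over the exterior $\{\rho>\rho_0\}$ of the equidistant hypersurface $\{\rho=\rho_0\}$, joined along it with vertical tangent planes; its asymptotic boundary is $S^{n-1}_+\times\{S(d)\}\cup S^{n-1}_+\times\{-S(d)\}$ over $\pain\{\rho>\rho_0\}$, glued with the finite cylinder $\partial S^{n-1}_+\times[-S(d),S(d)]$ over the equator $\pain P$, hence a topological $(n-1)$-sphere bounding $S^{n-1}_+\times[-S(d),S(d)]$ in $\pain\h^n\times\R$, i.e.\ homologically trivial. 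For $d=1$ one has $u'(\rho)=(\ch^{2n-2}(\rho)-1)^{-1/2}$, which behaves like $((n-1)\rho^2)^{-1/2}$ near $0$, so $u\to+\infty$ as $\rho\to 0^+$ while $u$ converges as $\rho\to\infty$: a complete non-entire graph over the halfspace $\{\rho>0\}$ with infinite data on $P=\{\rho=0\}$ and constant asymptotic data. For $d<1$ the denominator never vanishes, so $u(\rho)=\int_0^\rho\frac{d\,d\sigma}{\sqrt{\ch^{2n-2}(\sigma)-d^2}}$ is defined and bounded for all $\rho\in\R$: an entire vertical graph of finite height whose asymptotic boundary wraps once around $\pain\h^n$ and is therefore a homologically non-trivial $(n-1)$-sphere. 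Embeddedness in every case follows because the profile curve is embedded — a graph, resp. a pair of graphs meeting only at $(\rho_0,0)$ — and $G$ acts by isometries.

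Finally, the quantitative assertions for $d>1$ I would get by analysing $S(d)=\ch(a)\int_1^\infty(t^{2n-2}-1)^{-1/2}(\ch^2(a)t^2-1)^{-1/2}\,dt$, $d=\ch^{n-1}(a)$, directly: the height is $2S(d)$, and writing $c=\ch(a)\geq 1$, for each fixed $t>1$ the factor $\frac{c}{\sqrt{c^2t^2-1}}$ has $c$-derivative $-(c^2t^2-1)^{-3/2}<0$, so $S(d)$ is strictly decreasing in $d$; as $d\to 1^+$, $a\to 0^+$ and the integrand behaves like $\frac{1}{2\sqrt{n-1}}(t-1)^{-1}$ near $t=1$, so $S(d)\to\infty$; as $d\to\infty$, $\ch(a)\to\infty$, the integrand decreases to $t^{-1}(t^{2n-2}-1)^{-1/2}$, and $\int_1^\infty t^{-1}(t^{2n-2}-1)^{-1/2}\,dt=\frac{1}{n-1}\int_1^\infty\frac{d\xi}{\xi\sqrt{\xi^2-1}}=\frac{\pi}{2(n-1)}$, so the height decreases to $\pi/(n-1)$. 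The main obstacle is not the ODE but the passage back to global geometry: pinning down the asymptotic boundaries exactly (the gluing along $\pain P$ and the homological dichotomy between $d>1$ and $d<1$) and controlling the convergence, monotonicity and limit values of the improper integral $S(d)$.
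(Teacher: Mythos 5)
The paper does not prove this statement at all: Theorem \ref{T} is quoted verbatim from \cite{B-SE} (Theorem 3.8 there), so there is no internal proof to compare against. Your sketch is a faithful reconstruction of the construction in the cited source: reduction to the profile ODE via the first integral $\ch^{n-1}(\rho)u'/\sqrt{1+u'^2}=d$ of the equivariant area functional, the three regimes according to whether $\ch^{2n-2}(\rho)-d^2$ vanishes, and the analysis of $S(d)$; your substitution $\ch\rho=\ch(a)t$ recovering the stated formula for $S(d)$, the monotonicity computation $\partial_c\big(c(c^2t^2-1)^{-1/2}\big)=-(c^2t^2-1)^{-3/2}$, and the limits $S(d)\to\infty$ as $d\to1^+$ and $2S(d)\to\pi/(n-1)$ as $d\to\infty$ all check out. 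Two points are left implicit and would need to be said to make this complete (both are handled in \cite{B-SE}): first, that the Euler--Lagrange reduction of the one-dimensional functional really is equivalent to $H\equiv0$ for the invariant hypersurface (either by a direct mean-curvature computation in the Fermi coordinates or by symmetric criticality); second, for $d>1$, that the two graphs do not merely meet along the orbit $\{\rho=\rho_0,\ t=0\}$ with vertical tangent planes but form a single smooth (analytic) minimal hypersurface there --- this requires reparametrizing the profile near the turning point, e.g.\ writing $\rho$ as a function of $t$ (as in the catenoid description of Theorem \ref{T-cat3-10a}, where the generating curve is parametrized by $t$), rather than appealing to the graph description, which degenerates at $\rho_0$.
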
\bigskip

The hypersurfaces ${\Mc}_d$ are the analogous in higher  dimension of the
surfaces $M_d$ in $\hi2\times\r$. Also, as in $\hi2\times\r$, 
by {\em $($vertical$\,)$ hyperplane} we mean  a complete totally geodesic
hypersurface 
$\Pi\times \R$, where $\Pi$ is any totally geodesic hyperplane
of $\h^n \times \{0\}$. Moreover, we call a
{\em vertical  halfspace} any component of 
$(\h^n \times \R)\setminus P$ where $P$ is a vertical hyperplane. Thus, working 
with the  hypersurfaces ${\Mc}_d$
exactly in the same way as in  \thmref{plano},  we
obtain the following result.

\begin{theo}
\label{iperplano}
Let $M$ be a complete minimal hypersurface properly immersed in
$\h^n\times\r$, possibly with finite boundary. Let $P$ be a vertical geodesic hyperplane
and
$P_+$ one of the two halfspaces determined by $P.$ If $\partial M \subset\overline{ P_+}$ and $
\partial_{\infty}M\cap(\partial_{\infty}\h^n\times\r)\subset\partial_{\infty} P_+,$  then
$M\setminus\partial M\subset P_+,$ unless $M\subset P.$
\end{theo}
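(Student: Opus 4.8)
The plan is to mimic the proof of Theorem \ref{plano} almost verbatim, replacing the two-dimensional family $M_d$ by the higher-dimensional family $\Mc_d$ described in Theorem \ref{T}, part (1). The key geometric input is identical in spirit: for $d>1$ the hypersurface $\Mc_d$ is contained in the closure of the non mean convex side of the vertical cylinder over the equidistant hypersurface in $\h^n\times\{0\}$, this side is in turn contained in a vertical halfspace $Q_\gamma$ determined by a vertical geodesic hyperplane $\gamma\times\R$, every vertical translate of $\Mc_d$ stays in that same region, and as the two "poles" of the equidistant hypersurface collapse the whole family $\Mc_d$ degenerates to a vertical segment in $\pain\h^n\times\R$. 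I would first record these facts (they follow from Theorem \ref{T}-(1) exactly as the corresponding facts for $M_d$ were read off from \cite[Proposition 2.1]{ST2}), and I would note that $\gamma$ may be taken to be an arbitrary geodesic hyperplane of $\h^n$.

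Next I would choose, for a given vertical halfspace $P_+$ with $\partial M\subset\overline{P_+}$ and $\partial_\infty M\cap(\partial_\infty\h^n\times\R)\subset\partial_\infty P_+$, an equidistant hypersurface (hence a family $\Mc_d$) whose associated halfspace $Q_\gamma$ is strictly contained in $(\h^n\times\R)\setminus P_+$ and whose asymptotic "equator" avoids $\partial_\infty P$; this is the analogue of conditions P1 and P2. Then, exactly as in the two-dimensional case: the asymptotic boundary of any vertical translate of $\Mc_d$ lies in $\partial_\infty Q_\gamma\subset\partial_\infty\h^n\times\R\setminus\partial_\infty P_+$, which contains no finite-height points of $\partial_\infty M$ by hypothesis; hence a nonempty intersection of $M$ with $\Mc_d$ (for some $d>1$) would, upon collapsing the poles, produce a last interior contact point between $M$ and a deformed $\Mc_d$, contradicting the maximum principle. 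Therefore every vertical translate of every $\Mc_d$, $d>1$, is disjoint from $M$. Letting $d\to1$ and sweeping with vertical translates, the maximum principle forbids a first contact point, so $M$ lies in the closed complement of $Q_\gamma$ for every admissible $\gamma$, and intersecting over all such $\gamma$ gives $M\subset\overline{P_+}$. Finally, if an interior point of $M$ touches $\partial P_+=P$, the maximum principle forces $M\subset P$; otherwise $M\setminus\partial M\subset P_+$.

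The only genuinely new point compared with Theorem \ref{plano} is checking that the higher-dimensional family $\Mc_d$ of Theorem \ref{T}-(1) really does have the containment and degeneration properties used in the argument — in particular that every vertical translate of $\Mc_d$ stays inside a fixed vertical halfspace $Q_\gamma$, and that as $d\to1$ these halfspaces can be made to approach the boundary hyperplane $\gamma\times\R$, while as the poles of the equidistant hypersurface collapse, $\Mc_d$ shrinks to a vertical segment. All of this is contained in, or an immediate consequence of, the explicit description in Theorem \ref{T}, so I expect the "hard part" to be merely organizational: stating precisely which features of $\Mc_d$ are invoked and confirming that the maximum principle applies to the (possibly boundaried, properly immersed) hypersurface $M$ against the embedded barriers $\Mc_d$ and their vertical translates. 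Since the logical skeleton is word-for-word that of \thmref{plano}, I would write the proof tersely, flagging that the argument is identical once $M_d$ is replaced by $\Mc_d$.

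\begin{proof}
The proof is the same as that of \thmref{plano}, replacing the family $M_d$ in $\h^2\times\r$ by the family $\Mc_d$, $d>1$, of \thmref{T}-(1) in $\h^n\times\r$.

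We first record the properties of $\Mc_d$ that are needed. Fix $d>1$, write $d=\cosh^{n-1}(a)$, and let $\gamma\subset\h^n$ be a totally geodesic hyperplane. Denote by $Q_\gamma$ one of the two vertical halfspaces determined by $\gamma\times\r$. By \thmref{T}-(1), $\Mc_d$ is the union of two symmetric vertical graphs over the exterior of an equidistant hypersurface of $\h^n\times\{0\}$ at distance $a$ from $\gamma$, lying on the side of $\gamma\times\r$ corresponding to $Q_\gamma$; moreover $\Mc_d$ is contained in the closure $Z_d$ of the non mean convex side of the vertical cylinder over that equidistant hypersurface, and $Z_d\subset Q_\gamma$. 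Any vertical translate of $\Mc_d$ is again contained in $Z_d$, hence in $Q_\gamma$. As $a\to 0$ (that is $d\to 1$) the set $Z_d$ shrinks towards $\gamma\times\r$, so any vertical translate of $\Mc_d$ is arbitrarily close to $\gamma\times\r$ for $d$ close to $1$. Finally, the asymptotic boundary of $\Mc_d$ is the union of two copies of a hemisphere of $\pain\h^n\times\{0\}$ in the slices $t=\pm S(d)$ glued with the finite cylinder over the corresponding equator, so $\partial_{\infty}\Mc_d\subset\partial_\infty Q_\gamma\cap(\partial_\infty\h^n\times\r)$; and letting the two poles of that equator come together, $\Mc_d$ collapses to a vertical segment in $\partial_\infty\h^n\times\r$. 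All these properties hold for an arbitrary geodesic hyperplane $\gamma$ of $\h^n$.

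We now run the argument of \thmref{plano}. Choose a geodesic hyperplane $\gamma\subset\h^n$ such that the halfspace $Q_\gamma$ is strictly contained in $(\h^n\times\r)\setminus P_+$ and $\partial_\infty\gamma\cap\partial_\infty P=\emptyset$. By hypothesis, the intersection of $\partial_\infty M$ with $\partial_\infty(\h^n\times\r)\setminus\partial_\infty P_+$ contains no points at finite height, and, by the previous paragraph, the asymptotic boundary of any vertical translate of $\Mc_d$ is contained in $\partial_\infty Q_\gamma\subset\partial_\infty\h^n\times\r\setminus\partial_\infty P_+$. If $M\cap\Mc_d\neq\emptyset$ for some $d>1$, then, letting the poles of the equidistant equator come together, either we obtain a last interior (or boundary) contact point between $M$ and a deformed $\Mc_d$, contradicting the maximum principle, or the family $\Mc_d$ has nonempty intersection with $M$ all the way until it collapses to a vertical segment, which would force a point of $\partial_\infty M$ at finite height to lie in $\partial_\infty(\h^n\times\r)\setminus\partial_\infty P_+$, a contradiction. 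Hence $M$ is disjoint from every vertical translate of every $\Mc_d$, $d>1$.

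Letting $d\to 1$ and using that the maximum principle applies between $M$ and any vertical translate of $\Mc_d$, we conclude that no first contact point can occur, so $M$ is contained in the closed halfspace $(\h^n\times\r)\setminus Q_\gamma$. Since this holds for every geodesic hyperplane $\gamma$ satisfying the two properties above, $M\subset\overline{P_+}$. Finally, if some interior point of $M$ touches $\partial P_+=P$, the interior maximum principle gives $M\subset P$; otherwise $M\setminus\partial M\subset P_+$. This proves the theorem.
\end{proof}
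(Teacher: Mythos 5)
Your proof is correct and follows exactly the route the paper takes: the paper itself proves Theorem \ref{iperplano} by declaring that the argument of Theorem \ref{plano} goes through verbatim once the surfaces $M_d$ are replaced by the hypersurfaces $\Mc_d$ of Theorem \ref{T}-(1), which is precisely what you do, with the relevant containment and degeneration properties of $\Mc_d$ spelled out. No gap; your write-up is in fact more detailed than the paper's one-line proof.
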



Obviously, the analogous in higher dimension of Corollaries  \ref{coro-k-lines},
\ref{coro-halfspace}, \ref{one-end}  hold as well.

 Part (1) of next Theorem  is a  generalization in higher dimension of Corollary \ref{coro-boundary}, while part (2) was proved, for 
 $n=2$ by  the second and the third authors \cite[Corollary 2.2]{ST2}

\begin{theo}\label{T.slab.catenoid}
{Let $S_\infty \subset \pain \hi n \times \R$ be a closed set 
whose the vertical projection  on $\pain \hi n \times \{0\}$ omits
an open subset $U$.}
\begin{enumerate}
\item {Let  $M$ be a complete minimal hypersurface  properly
immersed in $\h^n\times\r$ such that $\pain M=S_\infty$.
Let $Q \subset \hi n \times \R$
be  a vertical  halfspace   whose asymptotic boundary is
contained in
$U \times \R.$ Then $M$ is contained in 
$\h^n\times\r \setminus  \ov Q.$}

\item\label{item.non existence} Assume that $S_\infty $ is contained in an open slab whose 
height is equal to $\frac{\pi}{n-1}$. Then, there is no complete connected  properly
immersed minimal
hypersurface 
$M$ in $\hi n \times \R$  with asymptotic boundary $S_\infty$.
\end{enumerate}
\end{theo}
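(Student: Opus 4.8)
\textbf{Proof plan for Theorem \ref{T.slab.catenoid}.}

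The plan is to prove the two parts in sequence, with part (1) being a direct application of Theorem \ref{iperplano} and part (2) following from part (1) together with the height estimates for the family $\{\Mc_d\}$ from Theorem \ref{T}. For part (1), the key observation is that a vertical halfspace $Q$ whose asymptotic boundary is contained in $U\times\R$ is itself a vertical halfspace $P_+$ (with $P=\partial Q$ a vertical hyperplane), and by hypothesis $\pain M = S_\infty$ has vertical projection omitting $U$, hence $\pain M \cap (\pain\hi n\times\R)$ is disjoint from the open set $U\times\R \supset \pain Q$. Consequently $\pain M$ is contained in the asymptotic boundary of the complementary halfspace $P_+ := (\h^n\times\R)\setminus\overline Q$. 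Since $M$ has no finite boundary, the hypotheses of Theorem \ref{iperplano} are satisfied with this $P_+$, so $M\setminus\partial M = M$ is contained in $P_+$, unless $M\subset P=\partial Q$; in either case $M\subset \h^n\times\R\setminus Q$, and since $M$ is closed and $\pain M$ avoids $\pain Q$ we conclude $M\subset \h^n\times\R\setminus\overline Q$, as desired.

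For part (2), I would argue by contradiction: suppose $M$ is a complete connected properly immersed minimal hypersurface with $\pain M = S_\infty$ contained in an open slab of height $\pi/(n-1)$. First apply part (1): since the vertical projection of $S_\infty$ omits the open set $U$, choose a vertical halfspace $Q$ with $\pain Q\subset U\times\R$ (shrink $U$ to a ball if needed so that such a $Q$ exists), whence $M\subset \h^n\times\R\setminus\overline Q$; in particular $M$ is \emph{not} entire, i.e. its vertical projection to $\h^n\times\{0\}$ is not all of $\h^n$. Now I would use the hypersurfaces $\Mc_d$, $d>1$, as barriers from above and below. After a vertical translation normalize so that $S_\infty$ lies in the open slab $\{|t|<\frac12\cdot\frac{\pi}{n-1}\}$ (or rather an open slab of height exactly $\pi/(n-1)$ centered appropriately). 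By Theorem \ref{T}(7)/(1), the vertical height $2S(d)$ of $\Mc_d$ increases to $\pi/(n-1)$ as $d\to 1$ and the halfspace over which $\Mc_d$ is a graph can be made to be the complement of an arbitrarily small equidistant neighborhood — so for $d$ close to $1$ one can position a vertical translate of $\Mc_d$ so that its asymptotic boundary is a hemispherical cylinder lying strictly above (respectively below) the slab containing $S_\infty$, with the asymptotic hemispheres at heights $\pm S(d)$ just outside the slab. The maximum principle argument of Theorem \ref{plano} / Theorem \ref{iperplano} — sliding the family $\Mc_d$ and using that the asymptotic boundaries are disjoint because $\pain M$ sits in the thin slab while the relevant pieces of $\pain\Mc_d$ do not — then forces $M$ to be disjoint from these translates, confining $M$ to lie in a region that collapses; letting $d\to 1$ and the translate parameters vary over all admissible positions squeezes $M$ into the slab and ultimately onto a vertical hyperplane or forces $\pain M$ to contain a curve at finite height outside the allowed region, contradicting $\pain M = S_\infty$.

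The main obstacle, and the step requiring the most care, is the positioning of the barriers $\Mc_d$ in part (2): one must verify that, because the slab has height \emph{exactly} $\pi/(n-1)$ and $2S(d)\nearrow \pi/(n-1)$ strictly from below, there is genuinely "no room" — any vertical translate of $\Mc_d$ whose asymptotic hemispheres avoid the open slab containing $S_\infty$ has its entire asymptotic boundary disjoint from $\pain M$, so the maximum principle comparison (exactly as in the proof of Theorem \ref{plano}, letting the equidistant hypersurface degenerate to its asymptotic geodesic hyperplane) applies and pushes $M$ out of the corresponding halfspace; doing this for every horizontal direction and every admissible height of translation exhausts $\h^n\times\R$ except possibly a vertical hyperplane, and a vertical hyperplane has asymptotic boundary \emph{not} contained in any slab of finite height, contradicting $\pain M=S_\infty\subset$ slab. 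One must also handle the degenerate/limiting configurations (the $p\to q$ collapse of $\Mc_d$ to a vertical cylinder over a point, as in the two-dimensional proof) to rule out a last point of contact. Assembling these comparisons carefully — and checking that connectedness of $M$ is what prevents $M$ from being a union of pieces escaping in different directions — completes the argument.
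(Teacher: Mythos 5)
Your treatment of part (1) is correct and is exactly the paper's route: apply Theorem \ref{iperplano} to the complementary halfspace, as in Corollary \ref{coro-boundary}. The problems are in part (2), where your plan has genuine gaps. First, you misquote Theorem \ref{T}: the height $2S(d)$ of $\Mc_d$ ($d>1$) does \emph{not} increase to $\pi/(n-1)$ as $d\to 1$; it is always strictly \emph{greater} than $\pi/(n-1)$, tends to infinity as $d\to 1$ and decreases to $\pi/(n-1)$ as $d\to\infty$. The family whose height increases to $\pi/(n-1)$ from below is the rotational catenoid family $\Cc_a$ of Theorem \ref{T-cat3-10a}, and that is the family the paper actually uses in part (2). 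Second, your key positioning step is impossible: the asymptotic boundary of $\Mc_d$ ($d>1$) is the union of two hemispheres at heights $\pm S(d)$ \emph{and} the vertical cylinder $\partial S_+^{n-1}\times[-S(d),S(d)]$, which crosses every intermediate height; since $2S(d)>\pi/(n-1)$, no vertical translate of $\Mc_d$ can have its whole asymptotic boundary strictly above or below a slab of height $\pi/(n-1)$. Disjointness of asymptotic boundaries can only be arranged horizontally (choosing $\partial S_+^{n-1}$ over $U$), and then sliding $\Mc_d$ only reproves part (1), i.e.\ a horizontal confinement. Third, your concluding ``exhaustion'' is unjustified: halfspace exclusions are available only for halfspaces whose asymptotic boundary lies over the omitted set $U$, which may be a small ball, so these exclusions cannot squeeze $M$ onto a vertical hyperplane, and the slab hypothesis never enters your argument in a working way.

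The missing idea is the one the paper uses. After a vertical translation one may assume (by comparison with slices) that $M$ itself lies in a slab $\mathcal{S}=\{\varepsilon<t<\tfrac{\pi}{n-1}-\varepsilon\}$ --- a step you also omit, but which is needed to keep boundaries apart. By part (1) there is a halfspace $\pi^+\times\R$, with $\pain\pi^+\subset U$, disjoint from $M$. Choose a catenoid $C=\Cc_a$ with $2T(a)>\tfrac{\pi}{n-1}-2\varepsilon$, positioned so that its two asymptotic copies of $\pain\hi n$ lie strictly above and strictly below $\pain\mathcal{S}$, and let $K=C\cap\overline{\mathcal{S}}$, a compact piece with $\partial K\subset\partial\mathcal{S}$, hence never touching $M$. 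Translate $K$ horizontally into $\pi^+\times\R$ (where it misses $M$) and then slide it back along a geodesic toward a point of $M$: since the boundaries of the translated copies stay in $\partial\mathcal{S}$, the first contact with $M$ is interior, so the maximum principle forces $M$ to coincide with a translated catenoid, contradicting $\pain M=S_\infty\subset\pain\mathcal{S}$. This is where the exact value $\pi/(n-1)$ is used: it is the supremum of the catenoid heights, so a catenoid can straddle any strictly smaller slab. Your part (2), as written, cannot be repaired without switching to these catenoid barriers (or an equivalent family with asymptotic boundary entirely outside the slab).
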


\begin{proof}
 The first statement is a consequence of Theorem \ref{iperplano} and the 
proof is analogous    to that of
Corollary \ref{coro-boundary}.

\smallskip

Let us prove the second statement. Assume, by contradiction,
that there is such a minimal
hypersurface  $M$ with asymptotic boundary $S_{\infty}$. Then, up to a vertical
translation, we
can assume that $M$ is
contained in the slab  
$\mathcal{S}:=\{ \varepsilon<t<\frac{\pi}{n-1}-\varepsilon\}$ for some $\varepsilon>0$,
and thus $S_\infty\subset \pain \mathcal{S}$.
 By assumption, there exists  a $(n-1)$-geodesic plane 
$\pi\subset \hi n\times \{0\} $ such that a component $\pi^+$  of 
$\hi n \times \{0\} \setminus \pi$ satisfies:
\begin{enumerate}
\item $\pain \pi^+ \subset U$.
\item $M \cap (\pi^+ \times \R) =\emptyset$.
\end{enumerate}
Let $C\subset \hi n \times (0,\frac{\pi}{n-1})$ be
any $n$-catenoid such that a component of its asymptotic boundary stays stricly above 
$\pain\mathcal{S}$ and the other component stays strictly below $\pain\mathcal{S}$.

We take a connected and compact piece $K$ of $C$ such that its
boundary lies in the boundary of the slab $\mathcal{S}$. 

Let $q\in M$ be a point and let $q_0\in \hi n \times \{0\}$ be the vertical
projection of $q$.
Let $p_\infty \in \pain\pi^+$ be an asymptotic point. 
 Denote by $\wt \gamma\subset  \pain \hi n \times \{0\}  $
the complete geodesic passing through $q_0$ such that 
$p_\infty \in \pain \wt \gamma$. 
 We can translate $K$ along $\wt\gamma$ such that the translated $K$  is contained in
the halfspace $\pi^+ \times \R$.

Now we  come back translating $K$ towards $M$ along $\wt\gamma.$ 
Observe that the boundary of the translated
copies of $K$  does not touch $M$. Therefore, doing the 
translations of $K$ along $\wt \gamma$ we find a first interior point of contact
between $M$ and a translated copy  of $K$. Hence, $M=C$ by the maximum
principle, which leads to a contradiction.  This completes the proof.
\end{proof}

\smallskip

Now we state a generalization of the Asymptotic Theorem proved in 
\cite[Theorem 2.1]{ST2}. 

 Our result establishes some  obstruction for the asymptotic boundary of a complete
properly immersed minimal hypersurface in 
$\h^n\times\r.$

\begin{theo}[Asymptotic Theorem]\label{T.Asymptotic Theorem}
Let $\Gamma \subset \pain \hi n  \times \R$ be a connected 
$(n-1)$-submanifold with boundary. Let 
{ \em Pr} $ :   \pain \hi n  \times \R \rightarrow \pain \hi n$ be the
projection on the first factor.  Assume that: 
\begin{enumerate}
\item There is some point $q_\infty \in \partial\,${\em Pr}$( \Gamma)$
such that 
$q_\infty \not\in$ {\em Pr}$(\partial  \Gamma)$.
\item  $\Gamma \subset \pain \hi n \times (t_0,t_0 + 
\frac{\pi}{n-1})$ for some real number $t_0$.
\end{enumerate}
Then, there is no properly and completely immersed minimal hypersurface (maybe with finite
boundary) $M\subset \hi n \times \R$ such that 
$ \pain M =\Gamma$ and $M\cup \Gamma$ is a continuous
$n$-manifold with boundary.
\end{theo}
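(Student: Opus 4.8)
The plan is to argue by contradiction by means of a maximum‑principle sliding argument with the family of $n$‑catenoids $\Cc_a$ of \thmref{T-cat3-10a}, in the spirit of the proof of \thmref{T.slab.catenoid}-(\ref{item.non existence}), with hypothesis $(1)$ taking over the role played there by the omitted open subset. Suppose such an $M$ exists. Since $\pain M=\Gamma$ lies entirely in $\pain\hi n\times\R$ and $M$ is proper, $M$ is contained in a slab $\{\,|t|\le C\,\}$, and, as $M$ limits onto $\Gamma$ at infinity, $M\cap\{\,|t|\ge h'\,\}$ is compact whenever $h'$ exceeds the $t$‑extent of $\Gamma$. (If $\partial M=\emptyset$, bringing the slices $\hi n\times\{\pm N\}$ in from $\pm\infty$ and invoking the interior maximum principle already forces $M$ to be a slice, so that $\Gamma=\pain\hi n\times\{c\}$ and $\partial\,\mathrm{Pr}(\Gamma)=\emptyset$, against $(1)$; thus $\partial M\neq\emptyset$, although the main argument does not need this.) By hypothesis $(2)$ and a vertical translation we may take $\Gamma\subset\pain\hi n\times(-\tfrac h2,\tfrac h2)$ for some $h<\tfrac{\pi}{n-1}$, and fix $h'$ with $\tfrac h2<h'<\tfrac{\pi}{2(n-1)}$. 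Choose $a$ with $2h'<h_R(a)=2T(a)<\tfrac{\pi}{n-1}$ — possible since $h_R$ increases to $\tfrac{\pi}{n-1}$ — translate $\Cc_a$ so that it is centred at $t=0$, and put $K:=\Cc_a\cap\{\,|t|\le h'\,\}$: a compact ``band'' whose finite boundary $\partial K$ sits in the slices $\{t=\pm h'\}$, whose vertical projection is a compact geodesic annulus about the axis of $\Cc_a$, and whose analytic continuation is the complete catenoid $\Cc_a$.

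Now bring in hypothesis $(1)$. As $q_\infty\in\partial\,\mathrm{Pr}(\Gamma)$, there is an ideal point $p_\infty$ close to $q_\infty$ over which $M$ is absent near infinity; as $q_\infty\notin\mathrm{Pr}(\partial\Gamma)$ and $M\cup\Gamma$ is a manifold with boundary near the fibre over $q_\infty$, no point of $\partial M$ lies near infinity over $q_\infty$. Take $p_\infty$ so close to $q_\infty$ that the complete geodesic $\wt\gamma\subset\hi n$ with ideal endpoints $p_\infty,q_\infty$ stays in a small neighbourhood of $q_\infty$ in the compactification; then translating $K$ hyperbolically along $\wt\gamma$ keeps it near infinity, where it can interact only with the part of $M$ close to infinity. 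Start with the axis of $\Cc_a$ near the $p_\infty$‑end, where $K$ is disjoint from $M$ (using, if necessary, the half‑space consequences of \thmref{iperplano}), and move the band towards the $q_\infty$‑end. Throughout, $\partial K$ stays in $\{t=\pm h'\}$, hence avoids $\Gamma$ and — $M\cap\{\,|t|\ge h'\,\}$ being a fixed compact set bounded away from $q_\infty$ — avoids $M$; on the other hand, once the axis is close enough to $q_\infty$ the band must meet $M$, since $M$ has points at heights $<h'$ arbitrarily near infinity over $q_\infty$ (it limits onto $(q_\infty,t_0)\in\Gamma$, with $|t_0|<h'$) and the moving annulus sweeps across their vertical projections. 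Hence there is a first translate $K_*$ of $K$ touching $M$; as $K_*$ reaches $M$ from a disjoint region the contact is one‑sided, and the contact point is interior to $K_*$ (there $|t|<h'$) and to $M$ (no point of $\partial M$ lies near infinity over $q_\infty$).

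By the interior maximum principle for minimal hypersurfaces and real analyticity, $M$ then coincides with the analytic continuation of $K_*$, that is, $M$ is an isometric copy of the complete catenoid $\Cc_a$. But then $\partial M=\emptyset$ and $\pain M=(\pain\hi n\times\{T(a)\})\cup(\pain\hi n\times\{-T(a)\})$, so $\mathrm{Pr}(\pain M)=\pain\hi n$ has empty boundary, contradicting hypothesis $(1)$. This contradiction proves the theorem.

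I expect the heart of the matter — and the main obstacle — to be making the sliding rigorous: producing a genuinely $M$‑free starting position for $K$, and, above all, ensuring that $K$ does not meet $M$ \emph{prematurely} through the ``bulk'' of $M$ (the part joining $\partial M$ to $\Gamma$) before the one‑sided interior contact near infinity occurs. In \thmref{T.slab.catenoid}-(\ref{item.non existence}) this is automatic, because $\mathrm{Pr}(\Gamma)$ omits an open subset of $\pain\hi n$ in which the band may be parked and manoeuvred; here hypothesis $(1)$ supplies no such open set, so one is forced to confine the band to a small neighbourhood of $q_\infty$ and to exploit both that $M$ limits onto $\Gamma$ near infinity and the ``fold'' of $\Gamma$ over $q_\infty$ forced by $q_\infty\in\partial\,\mathrm{Pr}(\Gamma)$, $q_\infty\notin\mathrm{Pr}(\partial\Gamma)$ — which also provides the tame boundary point $p_\infty$ needed to begin.
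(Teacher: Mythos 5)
Your overall strategy is the paper's: slide a compact band of an $n$-catenoid of height just under $\frac{\pi}{n-1}$, with boundary in two horizontal slices, horizontally toward $q_\infty$, and contradict the maximum principle at a first interior contact (the paper does this by localizing $M$ to a component $M_0$ of $M\cap(\omega^+\times\R)$ near $q_\infty$ and then quoting the sliding argument of \thmref{T.slab.catenoid}, whereas you keep all of $M$ and control it via the compactness of $M\cap\{|t|\geq h'\}$ and the absence of $\partial M$ near the fibre over $q_\infty$ --- an acceptable variant of the same bookkeeping). However, there is a genuine gap at the step you yourself flag as the main obstacle: the existence of a contact. You fix the sliding geodesic $\wt\gamma$ to be the geodesic with ideal endpoints $p_\infty$ and $q_\infty$ and assert that, since $M$ has points at heights $<h'$ arbitrarily close to $(q_\infty,t_0)$ in the compactification, ``the moving annulus sweeps across their vertical projections.'' This does not follow. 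The region swept by the band is contained in the tube of fixed hyperbolic radius $R=f(a,h')$ about $\wt\gamma$, while a sequence of points of $M$ converging to $(q_\infty,t_0)$ in the cone topology may approach $q_\infty$ tangentially and stay at hyperbolic distance from $\wt\gamma$ tending to infinity (in the upper half-plane picture with $q_\infty=0$ and $\wt\gamma$ asymptotically the vertical line $x=0$, points $(x_k,y_k)=(\sqrt{y_k},y_k)\to(0,0)$ have distance $\approx\arcsinh(x_k/y_k)\to\infty$ from $\wt\gamma$). So your sliding may simply never meet $M$, and no contradiction is produced.

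The paper closes exactly this hole by choosing the geodesic \emph{after} choosing a point of the surface: pick $q\in M$ (in the paper, $q\in M_0$) with $|t(q)|<h'$ and vertical projection $q_0$ sufficiently close to $q_\infty$, and let $\wt\gamma$ be the complete geodesic \emph{through $q_0$} having $p_\infty$ as one ideal endpoint. Then, when the axis of the band passes over $q_0$, the point $q$ lies in the solid region bounded by the band and the two horizontal caps at heights $\pm h'$, so $q$ must cross the band at some intermediate time (it cannot cross the caps since $|t(q)|<h'$); a first contact therefore exists, and since (for $q_0$ and $p_\infty$ close enough to $q_\infty$) both ideal endpoints of $\wt\gamma$, hence the whole $R$-tube, stay in a small neighborhood of $q_\infty$ avoiding the compact set $M\cap\{|t|\geq h'\}$ and $\partial M$, the contact is interior to both hypersurfaces. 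With this modification your argument runs; note also that your endgame slightly overreaches: if $\partial M\neq\emptyset$ the analytic continuation need not reach the whole catenoid (it could a priori be stopped by $\partial M$ outside the controlled region), but you do not need it --- continuation within the band alone forces $M$ to contain the boundary circles of $K_*$ at heights $\pm h'$ inside the region you arranged to be disjoint from $M\cap\{|t|\geq h'\}$, which is already a contradiction.
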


\begin{proof}
Assume, by contradiction, that there is such a minimal
hypersurface  $M$. Since $q_\infty \in\partial${Pr}$(\Gamma)$ and 
$q_\infty \not\in$ {Pr}$(\partial  \Gamma)$, there exists  a
$(n-1)$-geodesic plane 
$\omega\subset \hi n\times \{0\} $ such that a component $\omega^+$  of 
$\hi n \times \{0\} \setminus \omega$ satisfies:
\begin{enumerate}
\item $q_\infty \in \pain \omega^+$, $q_\infty\not\in \pain\omega$  
and   $\pain \omega^+ \cap${ Pr}$(\partial \Gamma)=\emptyset$.

\item If $M_0$ denotes the component of  $M \cap (\omega^+ \times \R)$
containing $q_\infty$ in its asymptotic boundary, then 
\begin{enumerate}
 \item $M_0\subset \hi n \times
(t_0,t_0 + \frac{\pi}{n-1})$ for some real number $t_0$.
\item $\partial  M_0 \subset \omega  \times 
 (t_0 {+ 2\varepsilon},t_0 {-2\varepsilon} +
\frac{\pi}{n-1})$ {for some $\varepsilon >0$.}

\end{enumerate}
\end{enumerate} 
Again, since $q_\infty \in\partial${Pr}$(\Gamma)$ and 
$q_\infty \not\in$ { Pr}$(\partial  \Gamma)$, there exists  a
$(n-1)$-geodesic plane 
$\pi\subset \hi n\times \{0\} $ such that a component $\pi^+$  of 
$\hi n \times \{0\} \setminus \pi$ satisfies:
\begin{enumerate}
\item $\pi^+ \subset \omega^+$.
\item  $\pain \pi^+ \cap\,${ Pr}$(\Gamma)=\emptyset$.
\item  $M_0 \cap (\pi^+ \times \R) =\emptyset$.
\end{enumerate}
Therefore we can find a compact part $K$ of a $n$-catenoid satisfying:
\begin{enumerate}
\item $K$ is connected.
\item $K \subset  \pi^+ \times (t_0+\varepsilon,t_0-\varepsilon + \frac{\pi}{n-1})$.
\item $\partial  K \subset \hi n \times 
\{t_0 {+\varepsilon}, t_0{-\varepsilon}  +
\frac{\pi}{n-1}\}$.
\end{enumerate}
We deduce consequently that $M_0 \cap K=\emptyset$. Then, considering the
horizontal translated copies of $K$ and arguing as in the proof of 
Theorem \ref{T.slab.catenoid}, we get a contradiction with the maximum
principle, which concludes the proof. 
\end{proof}

The following result is an immediate consequence of Theorem \ref{T.Asymptotic Theorem}.

\begin{coro}\label{C.asymptotic}
  Let $S_\infty \subset \pain \hi n \times \R$ be a
$(n-1)$-closed  continuous submanifold.
 Considering the halfspace model for $\hi n$, we can
assume that $S_\infty \subset \R^{n-1}\times\r  $.

  If $S_\infty$ is strictly convex in Euclidean  sense,
then there is no complete connected  properly
immersed minimal
hypersurface 
$M$ in $\hi n \times \R$, possibly with finite boundary, with asymptotic boundary
$S_\infty$ and such that 
$M\cup S_\infty$ is a continuous
$n$-manifold with boundary.
\end{coro}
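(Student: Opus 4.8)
The plan is to deduce Corollary \ref{C.asymptotic} directly from the Asymptotic Theorem (Theorem \ref{T.Asymptotic Theorem}) by verifying that the hypotheses of that theorem are satisfied whenever $S_\infty$ is a closed strictly convex Euclidean submanifold in the halfspace model. The first step is to reduce from the closed submanifold $S_\infty$ to a submanifold-with-boundary $\Gamma$ to which the theorem applies. Since $S_\infty$ is compact (closed $(n-1)$-submanifold) and lies in $\R^{n-1}\times\r$, it is contained in some slab $\pain\hi n\times(t_0,t_0+\frac{\pi}{n-1})$ after a vertical translation provided its vertical extent is less than $\frac{\pi}{n-1}$; but strict convexity does not bound the vertical extent, so instead one should cut: choose a horizontal slab of height less than $\frac{\pi}{n-1}$ whose interior meets $S_\infty$ in a piece $\Gamma$ that is a connected $(n-1)$-submanifold with boundary, with $\partial\Gamma$ lying in the two bounding slices. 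One then observes that if $M$ with $\pain M=S_\infty$ existed, then $M$ intersected with the same open slab would contain a component $M_0$ with $\pain M_0=\Gamma$, and we would contradict Theorem \ref{T.Asymptotic Theorem} applied to $\Gamma$ — so it suffices to check condition (1) of that theorem for $\Gamma$.

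Condition (1) requires a point $q_\infty\in\partial\,\mathrm{Pr}(\Gamma)$ with $q_\infty\notin\mathrm{Pr}(\partial\Gamma)$. This is exactly where strict Euclidean convexity of $S_\infty$ enters. The key geometric observation is: $\mathrm{Pr}(S_\infty)$, the vertical projection onto $\R^{n-1}=\pain\hi n$, is the Euclidean-orthogonal projection of a strictly convex hypersurface; its boundary $\partial\,\mathrm{Pr}(S_\infty)$ is the ``silhouette'' or apparent contour, consisting of those points of $S_\infty$ whose tangent plane is vertical (contains the $\r$-direction). At such a silhouette point $q_\infty$, strict convexity forces the tangent plane to touch $S_\infty$ only at a single point, hence $q_\infty$ is the projection of exactly one point of $S_\infty$, and by choosing the cutting slab to avoid the two ``vertical-tangent extremes'' (top and bottom of $S_\infty$ in the $\r$-direction), one arranges that this unique preimage lies strictly inside the slab, so $q_\infty\in\partial\,\mathrm{Pr}(\Gamma)$ while $q_\infty\notin\mathrm{Pr}(\partial\Gamma)$. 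Concretely: pick a supporting vertical hyperplane of the convex body bounded by $S_\infty$; strict convexity makes the contact a single point $x_0\in S_\infty$ with $q_\infty=\mathrm{Pr}(x_0)\in\partial\,\mathrm{Pr}(S_\infty)$; then slightly rotate/translate so that $q_\infty$ is still a boundary point of the projection of the slab-truncated piece but is not hit by the truncation circles $\partial\Gamma$.

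Condition (2) of Theorem \ref{T.Asymptotic Theorem}, namely $\Gamma\subset\pain\hi n\times(t_0,t_0+\frac{\pi}{n-1})$, holds by the very construction of $\Gamma$ as the intersection of $S_\infty$ with an open slab of height $<\frac{\pi}{n-1}$. One also needs $\Gamma$ connected, which can be arranged by taking the slab thin enough near a regular part of $S_\infty$ and then, if necessary, passing to a single component — and passing to a component is harmless since the theorem's contradiction only needs one component $M_0$ of $M$ with the stated asymptotic boundary, obtained by the same slab-truncation argument applied to $M$. Finally one invokes the theorem to conclude no such $M$ exists.

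The main obstacle I expect is the careful geometric argument in the second paragraph: showing that strict Euclidean convexity yields a silhouette point $q_\infty$ on the boundary of the projection that has a \emph{unique} preimage in $S_\infty$, and then choosing the truncating slab so that this preimage survives the truncation as an \emph{interior} point of $\Gamma$ (so $q_\infty\notin\mathrm{Pr}(\partial\Gamma)$) while $\Gamma$ remains connected. This requires distinguishing the ``vertical silhouette'' (tangent planes containing the vertical direction, which project to the boundary of $\mathrm{Pr}(S_\infty)$) from the ``horizontal extremes'' (the highest and lowest points of $S_\infty$, which must be excluded from the slab), and checking these two loci are disjoint enough that a valid slab exists — a routine but slightly delicate point in convex geometry that I would handle by a supporting-hyperplane argument together with strict convexity ruling out flat pieces.
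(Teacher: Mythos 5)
Your use of strict convexity is the right ingredient and is exactly what the Corollary needs: at a point $q_\infty\in\partial\,\mathrm{Pr}(S_\infty)$ the vertical line over $q_\infty$ lies in a vertical supporting hyperplane of the convex body, so by strict convexity it meets $S_\infty$ in a single point $p_0$, and hence (by compactness) the part of $S_\infty$ over a small neighbourhood of $q_\infty$ lies in an arbitrarily short height interval. The genuine gap is in your reduction step, namely the sentence ``$M$ intersected with the same open slab would contain a component $M_0$ with $\pain M_0=\Gamma$''. Theorem \ref{T.Asymptotic Theorem} is stated for a hypersurface whose asymptotic boundary is \emph{exactly} the submanifold $\Gamma$ and which attaches to $\Gamma$ as a continuous $n$-manifold with boundary. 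For a component $M_0$ of $M\cap\{t_1<t<t_2\}$ neither property is available: its asymptotic boundary is only some closed subset of $S_\infty\cap\{t_1\le t\le t_2\}$ containing $p_0$ (it can be a proper piece of your annular $\Gamma$, and it can contain points of the two cutting circles at heights $t_1,t_2$), it need not be a submanifold with boundary at all, and the closure of the cut piece need not be a manifold with boundary along the slices $\{t=t_i\}$ (where $M$ may be tangent to the slice) nor where its asymptotic boundary meets those slices. So the black-box application of the theorem to the slab-truncated piece does not go through as written; nothing in your proposal fills this in, and it is not a cosmetic point, since the exact equality $\pain M_0=\Gamma$ is precisely what the theorem's hypotheses demand.

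The way the authors intend the Corollary to be ``immediate'' is not by cutting $M$ horizontally and re-invoking the statement, but by observing that the \emph{proof} of Theorem \ref{T.Asymptotic Theorem} localizes at $q_\infty$: one only needs that, over a sufficiently small vertical halfspace $\omega^+\times\R$ with $q_\infty\in\pain\omega^+$, the relevant piece of $M$ lies in a slab of height less than $\frac{\pi}{n-1}$ and that a smaller halfspace $\pi^+\times\R$ (with $\pain\pi^+$ outside $\mathrm{Pr}(S_\infty)$) is disjoint from $M$, after which the catenoid-sliding argument applies verbatim. The first fact is where strict convexity enters, and it is stronger than what you prove: not only the asymptotic boundary but \emph{all of} $M\cap(\omega^+\times\R)$ has heights close to $t(p_0)$, because $M$ has globally bounded height (slices plus properness, since $S_\infty$ is compact) and a sequence of points of $M$ over shrinking halfspaces at $q_\infty$ with heights bounded away from $t(p_0)$ would, by properness, produce a point of $\pain M=S_\infty$ on the vertical line over $q_\infty$ different from $p_0$, contradicting the single-point fibre. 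With that localization the horizontal cut is unnecessary (and if you insist on cutting, you must additionally check that the cut boundary $M\cap\{t=t_1,t_2\}$ stays out of the comparison region near $q_\infty$, which again requires this same argument). As it stands, your proposal identifies the correct role of convexity but leaves the actual passage from $S_\infty$ to a configuration covered by Theorem \ref{T.Asymptotic Theorem} unproved.
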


\begin{remark}\label{R.graph}
{\em It follows from Corollary \ref{C.asymptotic} that there is no horizontal minimal
graph in $\h^n\times \r$, \cite[Equation (3)]{Sa2}, given by a positive function 
$g\in C^2(\Omega)\cap C^0(\ov \Omega)$, where 
$\Omega\subset\R^{n-1}\times\r   \subset \pain \hi n\times \R$ is a bounded strictly
convex domain in Euclidean sense,  assuming zero value  on $\partial \Omega$.}
\end{remark}

\end{document}